\newtheorem{thm}{Theorem}[section]
\newtheorem{lemma}[thm]{Lemma}
\newtheorem{prop}[thm]{Proposition}
\newtheorem{cor}[thm]{Corollary}
\theoremstyle{definition}
\newtheorem{ex}[thm]{Example}
\newtheorem{defi}[thm]{Definition}
\newtheorem{remark}[thm]{Remark}
\newtheorem*{thm*}{Theorem}
\newcommand{\munderbar}[1]{\underaccent{\bar}{#1}}
\newcommand{\gl}{\mathfrak{gl}}
\newcommand{\Sl}{\mathfrak{sl}}
\newcommand{\supp}{\mathrm{supp}}
\newcommand{\End}{\operatorname{End}}
\newcommand{\trl}{\sharp}
\newcommand{\tr}{\operatorname{tr}}
\newcommand{\id}{\mathrm{id}}
\newcommand{\frg}{\mathfrak{g}}
\newcommand{\frr}{\mathfrak{r}}
\newcommand{\frs}{\mathfrak{s}}
\newcommand{\bz}{{\bar{0}}}
\newcommand{\bo}{{\bar{1}}}
\numberwithin{equation}{section}
\begin{document}

%%%%% BEGGINING %%%%%

\author[Bahturin, Y.]{Yuri Bahturin}
\address{Department of Mathematics and Statistics,
  Memorial University of Newfoundland,
  St. John's, NL, A1C5S7, Canada}
\email{bahturin@mun.ca}

\author[Dos Santos, H.S.]{Helen Samara Dos Santos}
%\address{Department of Mathematics and Statistics,
%  Memorial University of Newfoundland,
%  St. John's, NL, A1C5S7, Canada}
\email{helensds@mun.ca}

\author[Hornhardt, C.]{Caio De Naday Hornhardt}
%\address{Department of Mathematics and Statistics,
%  Memorial University of Newfoundland,
%  St. John's, NL, A1C5S7, Canada}
\email{cdnh22@mun.ca}

\author[Kochetov, M.]{Mikhail Kochetov}
%\address{Department of Mathematics and Statistics,
%  Memorial University of Newfoundland,
%  St. John's, NL, A1C5S7, Canada}
\email{mikhail@mun.ca}

%\thanks{${}^\star$ xooapmdpo}

\thanks{The authors acknowledge financial support by the Natural Sciences and Engineering Research Council (NSERC) of Canada.}

\date{}

\title{Group gradings on the Lie and Jordan superalgebras $Q(n)$}

\subjclass[2010]{Primary 17B70; Secondary 17A70, 17C70, 16W50}

%% Primary 17B70 = Graded Lie (super)algebras
%% Secondary 17A70 = Superalgebras
%%           17C70 = Jordan superalgebras
%%           16W50 = Graded rings and modules
% %      (?) 17B40 = Automorphism, derivations, other operators

\keywords{Graded algebra, group grading, simple Lie superalgebra, classical Lie superalgebra, queer Lie superalgebra, simple Jordan superalgebra}

\begin{abstract}
We classify gradings by arbitrary abelian groups on the classical simple Lie and Jordan superalgebras $Q(n)$, $n \geq 2$, over an algebraically closed field of characteristic different from $2$ (and not dividing $n+1$ in the Lie case): fine gradings up to equivalence and $G$-gradings, for a fixed group $G$, up to isomorphism.
\end{abstract}

\maketitle

%%%%% SECTIONS %%%%%
%-----------------------------------------------
\section{Introduction}\label{se:intro}

The classification of gradings by arbitrary abelian groups on finite-di\-men\-si\-onal simple Lie and Jordan algebras over an algebraically closed field $\mathbb{F}$ of characteristic $0$ is essentially complete (see, e.g., the monograph \cite{EKmon} and the references therein). Descriptions of group gradings on classical simple Lie algebras (except $D_4$) were obtained in \cite{BSZ05} and \cite{BZ06}. In these descriptions, though, the same grading could be written in many ways. A classification of group gradings for classical simple Lie algebras was done in \cite{BK10} ($G$-gradings, for a fixed group $G$, up to isomorphism, with $\mathrm{char}\,\mathbb{F}\ne 2$) and \cite{Eld10} (fine gradings up to equivalence, with $\mathrm{char}\,\mathbb{F}=0$). Assuming the grading group $G$ finite and 
$\mathrm{char}\,\mathbb{F}=0$, a classification of inner $G$-gradings for type $A_n$ was obtained independently in \cite{ZZ10}. A method to relate gradings on associative algebras to gradings on corresponding Lie and Jordan algebras, which works in arbitrary characteristic and for sufficiently high dimension (including infinite), was proposed in \cite{BB,BBS} and used to classify gradings on finitary simple Lie algebras in \cite{BBK}.

We are interested in group gradings on finite-dimensional simple Lie and Jordan superalgebras. In characteristic $0$, these superalgebras (that are not Lie or Jordan algebras) were classified by V.~G.~Kac in \cite{artigokac,kacZ} (see also \cite{livrosuperalgebra,CM}). 
The classification of gradings on simple Lie superalgebras by the group $\mathbb{Z}$ was obtained in \cite{kacZ} and by finite cyclic groups in \cite{serganova}. More recently, fine gradings on the exceptional simple Lie and Jordan superalgebras were classified in \cite{artigoelduque} and \cite{CDM}, respectively. 

In the present work we consider the simple Lie and Jordan superalgebras $Q(n)$, $n\ge 2$. We reduce the problem of classifying group gradings on $Q(n)$ to the same problem for the simple Lie algebra of type $A_n$, which is the even component of the Lie superalgebra $Q(n)$, or to the simple Jordan algebra $M_n^{(+)}$, which is the even component of the Jordan superalgebra $Q(n)$. More precisely, we prove that a $G$-grading on $Q(n)$ is completely determined by a $G$-grading on its even component and an element of the group $G$ (Theorem \ref{ida}). This allows us to classify $G$-gradings up to isomorphism (Theorem \ref{th:isomorphism}) and fine gradings up to equivalence (Theorem \ref{th:fine-gradings}). For these results, we will need to assume that $\mathbb{F}$ is algebraically closed, $\mathrm{char}\,\mathbb{F}\ne 2$ and, in addition, $\mathrm{char}\,\mathbb{F}$ does not divide $n+1$ in the Lie case. Some of the auxiliary and related results are valid under weaker assumptions on $\mathbb{F}$, for example, the classification of gradings on the simple associative superalgebra that gives rise to $Q(n)$ (see Theorem \ref{th:assoc}).

We will now recall the necessary background on group gradings and fix the notation. All vector spaces, algebras and modules are over a fixed ground field $\mathbb{F}$ and usually assumed finite-dimensional. The components of the $\mathbb{Z}_2$-grading that is a part of the definition of a superalgebra will be labeled by superscripts $\bz$ and $\bo$, reserving subscripts for the components of other gradings (see Definition \ref{G-grading}). The degree according to the canonical $\mathbb{Z}_2$-grading will be referred to as parity (even or odd). A subspace $W$ of a superalgebra $A=A^\bz\oplus A^\bo$ is said to be \emph{compatible with the superalgebra structure} if $W=W^\bz\oplus W^\bo$ where $W^\bz=W\cap A^\bz$ and $W^\bo=W\cap A^\bo$. All subalgebras and ideals under consideration will be assumed to have this property. A homomorphisms of superalgebras is a homomorphism of algebras that preserves parity. 

Now we fix a group $G$, written multiplicatively, with the identity element denoted by $e$. 

\begin{defi}\label{G-grading}
A \emph{$G$-grading} on a superalgebra $A$ is a vector space decomposition $\Gamma:\,A = \bigoplus_{g \in G} A_g$ such that $A_g A_h\subseteq A_{gh}$, for all $g,h\in G$, and each $A_g$ is compatible with the superalgebra structure, i.e., $A_g=A_g^\bz\oplus A_g^\bo$. If $\Gamma$ is fixed, $A$ is referred to as a {\em $G$-graded superalgebra}. The nonzero elements $x\in A_g$ are said to be {\em homogeneous of degree $g$}. The {\em support} of $\Gamma$ is the set $\supp(\Gamma)=\{g\in G\;|\;A_g\neq 0\}$. We have $\supp(\Gamma)=\supp_\bz(\Gamma)\cup\supp_\bo(\Gamma)$ where $\supp_i(\Gamma)=\{g\in G\;|\;A_g^i\neq 0\}$.
\end{defi}

There is a concept of grading not involving groups. This is a decomposition $\Gamma:\,A=\bigoplus_{s\in S}A_s$ as a direct  sum of nonzero subspaces (compatible with the superalgebra structure) indexed by a set $S$ and having the property that, for any $s_1,s_2\in S$ with $A_{s_1}A_{s_2}\ne 0$, there exists (unique) $s_3\in S$ such that $A_{s_1}A_{s_2}\subseteq A_{s_3}$. For such a decomposition $\Gamma$, there may or may not exist a group $G$ containing $S$ that makes $\Gamma$ a $G$-grading. If such a group exists, $\Gamma$ is said to be a {\em group grading}. However, $G$ is usually not unique even if we require that it should be generated by $S$. The {\em universal grading group} is generated by $S$ and has the defining relations $s_1s_2=s_3$ for all $s_1,s_2,s_3\in S$ such that $0\ne A_{s_1}A_{s_2}\subseteq A_{s_3}$ (see \cite[Chapter 1]{EKmon} for details).

\begin{defi}
Let $\Gamma:\,A=\bigoplus_{g\in G} A_g$ and $\Delta:\,B=\bigoplus_{h\in H} B_h$ be two group gradings, with supports $S$ and $T$, respectively.
We say that $\Gamma$ and $\Delta$ are {\em equivalent} if there exists an isomorphism of superalgebras $\varphi\colon A\to B$ and a bijection $\alpha\colon S\to T$ such that $\varphi(A_s)=B_{\alpha(s)}$ for all $s\in S$. If $G$ and $H$ are universal grading groups then $\alpha$ extends to an isomorphism $G\to H$. In the case $G=H$, the $G$-gradings $\Gamma$ and $\Delta$ are {\em isomorphic} if $A$ and $B$ are isomorphic as $G$-graded superalgebras, i.e., if there exists an isomorphism of superalgebras $\varphi\colon A\to B$ such that $\varphi(A_g)=B_g$ for all $g\in G$. 
\end{defi}

If $\Gamma:\,A=\bigoplus_{g\in G} A_g$ and $\Gamma':\,A=\bigoplus_{h\in H} A'_h$ are two gradings on the same superalgebra, with supports $S$ and $T$, respectively, then we will say that $\Gamma'$ is a {\em refinement} of $\Gamma$ (or $\Gamma$ is a {\em coarsening} of $\Gamma'$) if for any $t\in T$ there exists (unique) $s\in S$ such that $A'_t\subseteq A_s$. If, moreover, $A'_t\ne A_s$ for at least one $t\in T$, then the refinement is said to be {\em proper}. A grading $\Gamma$ is said to be {\em fine} if it does not admit any proper refinements. Note that $A=\bigoplus_{(g,i)\in G\times\mathbb{Z}_2}A_g^i$ is a refinement of $\Gamma$. It follows that if $\Gamma$ is fine then the sets $\supp_\bz(\Gamma)$ and $\supp_\bo(\Gamma)$ are disjoint.

Given a $G$-grading $\Gamma:\,A=\bigoplus_{g\in G} A_g$, any group homomorphism $\alpha\colon G\to H$ induces an $H$-grading ${}^\alpha\Gamma$ on $A$ whose homogeneous component of degree $h$ is the sum of all $A_g$ with $\alpha(g)=h$. Clearly, ${}^\alpha\Gamma$ is a coarsening of $\Gamma$ (not necessarily proper). If $G$ is the universal group of $\Gamma$ then every coarsening of $\Gamma$ is obtained in this way. If $\Gamma$ and $\Gamma'$ are two gradings, with universal groups $G$ and $H$, then $\Gamma'$ is equivalent to $\Gamma$ if and only if $\Gamma'$ is isomorphic to ${}^\alpha\Gamma$ for some group isomorphism $\alpha\colon G\to H$. 

It can be shown that if $\Gamma$ is a group grading on a simple Lie superalgebra then the subgroup generated by $\supp(\Gamma)$ is abelian (the proof of Proposition 1.12 for Lie algebras in \cite{EKmon} works with minor changes). 
This result does not hold for simple Jordan algebras, as was observed in \cite{BSBF} using the Jordan algebras of bilinear forms. Nevertheless, from now on, we will work exclusively with \emph{abelian} grading groups. We denote by $\widehat{G}$ the group of characters of an abelian group $G$, i.e., homomorphisms $G\to\mathbb{F}^\times$. A $G$-grading $\Gamma$ on $A$ gives rise to an action of $\widehat{G}$ by automorphisms of $A$, $\eta_\Gamma\colon\widehat{G}\to\mathrm{Aut}(A)$, defined by $\eta_\Gamma(\chi)(a)=\chi(g)a$ for all $\chi\in\widehat{G}$, $g\in G$, $a\in A_g$. If $\mathbb{F}$ is algebraically closed and $\mathrm{char}\,\mathbb{F}=0$, the grading $\Gamma$ can be recovered from $\eta_\Gamma$ as the eigenspace decomposition of $A$ relative to the commuting automorphisms $\eta_\Gamma(\chi)$, $\chi\in\widehat{G}$.

%-----------------------------------------------
\section{Lie and Jordan superalgebras $Q(n)$}\label{se:the-lie-superalgebra-Qn}

All classical series of Lie and Jordan superalgebras have standard matrix models. Let $M_{n\times m}$ be the space of $n\times m$ matrices over $\mathbb{F}$, 
$\mathrm{char}\,\mathbb{F}\ne 2$, and let $M_n=M_{n\times n}$.
The associative superalgebra $M(m,n)$ is the matrix algebra $M_{m+n}$ equipped with the following $\mathbb{Z}_2$-grading:

\begin{align*}
M(m,n)^\bz &=  \left\{
\left[
\begin{matrix}  
  a & 0 \\
  0 & b \\  
  \end{matrix}
\right] \in M(m,n) \; \Big| \;
a \, \in M_m \, \text{and} \, b \in M_n \right\},
\\
M(m,n)^\bo &=  \left\{
\left[
\begin{matrix}  
  0 & c \\
  d & 0 \\  
  \end{matrix}
\right] \in M(m,n) \; \Big| \;
c\in M_{m\times n} \, \text{and} \, d\in M_{n \times m} \right\}.
\end{align*}

Recall that any associative superalgebra $A=A^\bz\oplus A^\bo$ becomes a Lie superalgebra with respect to the \emph{supercommutator}, which is defined by 
\[
[x,y] = xy - (-1)^{ij} yx,\quad x\in A^i,\,y\in A^j,\,i,j\in\mathbb{Z}_2,
\]
for homogeneous elements and extended by linearity. This Lie superalgebra is denoted by $A^{(-)}$.

Similarly, any associative superalgebra $A$ becomes a Jordan superalgebra, denoted $A^{(+)}$, with respect to the \emph{supersymmetrized product}, which is defined by 
\[
x\circ y = xy + (-1)^{ij} yx,\quad x\in A^i,\,y\in A^j,\,i,j\in\mathbb{Z}_2.
\]
(This product is sometimes normalized with the factor $\frac12$.)

\subsection{The Lie superalgebra $Q(n)$}

The special linear Lie superalgebras (series $A$) are constructed from $M(m,n)^{(-)}$ by taking the quotient of the derived superalgebra modulo its center. The three orthosymplectic series ($B$, $C$, $D$) and the periplectic series ($P$) are constructed in the same way from the subalgebras of skew-symmetric elements in $M(m,n)^{(-)}$ with respect to appropriate superinvolutions. Series $Q$ is different in that we have to start from an associative superalgebra that is simple as a superalgebra but not as an algebra. Namely, let $A=R\times R$, with component-wise product, where $R=M_{n+1}$, $n\ge 1$. Then $(x,y)\mapsto(y,x)$ is an automorphism of $A$ of order $2$ and hence its eigenspace decomposition is a $\mathbb{Z}_2$-grading on $A$, with $A^\bz=\{(x,x)\;|\;x\in R\}$ and $A^\bo=\{(x,-x)\;|\;x\in R\}$. Note that $A^\bz$ is isomorphic to $R$ as an algebra and $A^\bo=uA^\bz$ where $u=(1,-1)$, so we may write $A=R\oplus uR$ where $u$ is odd, commutes with the elements of $R$ and satisfies $u^2=1$. This latter definition works even if $\mathrm{char}\,\mathbb{F}=2$. The associative superalgebra $A$ can be identified with a subalgebra of $M(n+1,n+1)$ as follows:
\[
\left\{
\left[
\begin{matrix}  
  a & b \\
  b & a \\  
  \end{matrix}
\right] \in M(n+1,n+1) \; \Big| \;
a,b \in M_{n+1} \right\}\stackrel{\sim}{\to}A,\quad 
\left[\begin{matrix}  
  a & b \\
  b & a \\  
  \end{matrix}\right]\mapsto a+ub.
\] 
Let $\widetilde{Q}(n)$ be the derived superalgebra of $A^{(-)}$. Then 
\[
\widetilde{Q} (n) = \left\{
\left[
\begin{matrix}  
  a & b \\
  b & a \\  
  \end{matrix}
\right] \in M(n+1,n+1) \; \Big| \;
a,b \in M_{n+1},\,\tr(b)=0 \right\}.
\] 
Set $Q(n)$ to be the quotient of $\widetilde{Q}(n)$ by its center, which is spanned by the identity matrix:
\[
Q(n)=\frac{\widetilde{Q} (n)}{\mathbb{F}1}.
\]
Thus, the even part of $Q(n)$ can be identified with $\mathfrak{pgl}(n+1):=R/\mathbb{F}1$ and the odd part with $\Sl(n+1)$. If $n\geq 2$ then $Q(n)$ is a simple Lie superalgebra (see \cite{artigokac, livrosuperalgebra}).
We may denote $Q(n)$ simply by $Q$ when $n$ is clear from the context.

If $\mathrm{char}\,\mathbb{F}$ does not divide $n+1$, the Lie algebras $\mathfrak{pgl}(n+1)$ and $\Sl(n+1)$ are isomorphic by means of the map $a+\mathbb{F}1\mapsto a^\trl$ where 
\[
a^\trl:=a-\frac{1}{n+1}\tr(a)1,\, a\in M_{n+1}.
\]
We may, therefore, identify both even and odd parts of the Lie superalgebra $Q(n)$ with $\mathfrak{sl}(n+1)$. 
In this way, we obtain another realization of $Q(n)$, which will be convenient for us in Section \ref{se:gradings-on-Qn}: 
\begin{equation}\label{identification}
  \begin{matrix}
    Q(n) & \stackrel{\sim}{\to} & \mathfrak{sl}(n+1)\oplus \mathfrak{sl}(n+1) \\
   \left[\begin{matrix}  a & b \\  b & a   \end{matrix}\right] + \mathbb{F}1 & \mapsto & (a^\trl, b).
 \end{matrix}
\end{equation}
To distinguish between $Q^\bz$ and $Q^\bo$, we will denote $(x,0)$ by $x$ and $(0,x)$ by $\munderbar{x}$, for $x\in\Sl(n+1)$. The mapping $x\mapsto \munderbar{x}$ is an isomorphism $Q^\bz\to Q^\bo$ as $Q^\bz$-modules, which looks as follows in terms of the realization of $Q(n)$ as a subalgebra of $M(n+1,n+1)^{(-)}/\mathbb{F}1$:
\[
  \left[ \begin{matrix}
  a & 0 \\
  0 & a
  \end{matrix} \right] + \mathbb{F}1 \mapsto \left[ \begin{matrix}
  0 & a^\trl \\
  a^\trl & 0
  \end{matrix}  
  \right] + \mathbb{F}1.
\]
It follows that the bracket of the Lie superalgebra $Q(n)$ in the realization \eqref{identification} is given by
\begin{equation}\label{star}
[a,b]=  ab-ba,\quad
[a,\munderbar{b}]  =  \underline{ab-ba},\quad
[\munderbar{a},\munderbar{b}]  = (ab+ba)^\trl,
\end{equation}
for all $a,b\in\mathfrak{sl}(n+1)$, where juxtaposition denotes multiplication in $M_{n+1}$.

\subsection{The Jordan superalgebra $Q(n)$}

The Jordan case is easier: set $Q(n)=A^{(+)}$ where $A=M_n\oplus uM_n$ (note that the matrix size is one less than in the Lie case). In other words, 
\[
Q (n) = \left\{
\left[
\begin{matrix}  
  a & b \\
  b & a \\  
  \end{matrix}
\right] \in M(n,n) \; \Big| \;
a,b \in M_{n}\, \right\}.
\] 
This is a simple Jordan superalgebra if $n\ge 2$.

We may identify both even and odd parts of the Jordan superalgebra $Q(n)$ with $M_n^{(+)}$, leading to another realization of $Q(n)$, which will be convenient in Section \ref{se:gradings-on-Qn}: 
\begin{equation}\label{J_identification}
  \begin{matrix}
    Q(n) & \stackrel{\sim}{\to} & M_n^{(+)}\oplus M_n^{(+)} \\
   \left[\begin{matrix}  a & b \\  b & a   \end{matrix}\right] & \mapsto & (a, b).
 \end{matrix}
\end{equation}
We will denote $(x,0)$ by $x$ and $(0,x)$ by $\munderbar{x}$, for $x\in M_n$, so the mapping $x\mapsto \munderbar{x}$ is an isomorphism $Q^\bz\to Q^\bo$ as $Q^\bz$-modules, which looks as follows in terms of the realization of $Q(n)$ as a subalgebra of $M(n,n)^{(+)}$:
\[
  \left[ \begin{matrix}
  a & 0 \\
  0 & a
  \end{matrix} \right]\mapsto \left[ \begin{matrix}
  0 & a \\
  a & 0
  \end{matrix}  
  \right].
\]
The product of $Q(n)$ in the realization \eqref{J_identification} is given by
\begin{equation}\label{J_star}
a\circ b  =  ab+ba,\quad
a\circ\munderbar{b} =  \underline{ab+ba},\quad
\munderbar{a}\circ\munderbar{b}  = ab-ba,
\end{equation}
for all $a,b\in M_n$, where juxtaposition denotes multiplication in $M_{n}$.

\subsection{Preliminary results}

We will later need the following fact about automorphisms of the superalgebra $Q=Q(n)$, $n\ge 2$. 
For an automorphism $\varphi\colon Q\to Q$, we will denote by $\varphi_i$ the restriction of $\varphi$ to $Q^i$, $i\in\mathbb{Z}_2$. Let $\upsilon$ be the parity automorphism, i.e., $\upsilon_i=(-1)^i\id$.

\begin{prop}\label{Aut-Qn}
If $\sqrt{-1}\in\mathbb{F}$ then the restriction map $\mathrm{Aut}(Q)\to\mathrm{Aut}(Q^\bz)$ is a surjective homomorphism whose kernel is generated by $\upsilon$.
\end{prop}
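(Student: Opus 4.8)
The plan is to analyze the restriction map $\rho\colon\mathrm{Aut}(Q)\to\mathrm{Aut}(Q^\bz)$, $\varphi\mapsto\varphi_\bz$, in three stages: well-definedness, the kernel, and surjectivity. That $\rho$ is a well-defined homomorphism is immediate: since every automorphism of $Q$ preserves parity, $\varphi_\bz$ is a bijective self-map of $Q^\bz$ respecting the product $Q^\bz\times Q^\bz\to Q^\bz$, hence lies in $\mathrm{Aut}(Q^\bz)$, and $(\varphi\psi)_\bz=\varphi_\bz\psi_\bz$. The structural input I would use throughout is encoded in \eqref{star} (Lie case) and \eqref{J_star} (Jordan case): the map $x\mapsto\munderbar{x}$ is an isomorphism of $Q^\bz$-modules $Q^\bz\to Q^\bo$, and the odd--odd product defines a nonzero $Q^\bz$-invariant bilinear map $\gamma\colon Q^\bz\times Q^\bz\to Q^\bz$, namely $\gamma(a,b)=(ab+ba)^\trl$ in the Lie case and $\gamma(a,b)=ab-ba$ in the Jordan case.

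For the kernel, suppose $\varphi_\bz=\id$. Writing $\varphi_\bo(\munderbar{x})=\munderbar{\theta(x)}$ defines a linear map $\theta\colon Q^\bz\to Q^\bz$, and I would extract from the even--odd product that $\theta$ commutes with every left multiplication of $Q^\bz$: applying $\varphi$ to $a\cdot\munderbar{b}$ and using $\varphi_\bz=\id$ gives $\theta(a\ast b)=a\ast\theta(b)$, where $\ast$ denotes the action of $Q^\bz$ on itself. Thus $\theta$ lies in the centroid of $Q^\bz$, which equals $\mathbb{F}$ because $Q^\bz$ (that is, $\mathfrak{sl}(n+1)$ or $M_n^{(+)}$) is central simple; hence $\theta=\lambda\,\id$, i.e.\ $\varphi_\bo=\lambda\,\id$. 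Applying $\varphi$ to the odd--odd product and again using $\varphi_\bz=\id$ yields $\gamma(a,b)=\lambda^2\gamma(a,b)$ for all $a,b$; since $\gamma\neq 0$ we get $\lambda^2=1$, so $\lambda=\pm1$. These two possibilities give exactly $\varphi=\id$ and $\varphi=\upsilon$, so $\ker\rho=\langle\upsilon\rangle$.

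For surjectivity, given $\psi\in\mathrm{Aut}(Q^\bz)$ I would seek an extension of the form $\varphi_\bz=\psi$ and $\varphi_\bo(\munderbar{x})=\mu\,\munderbar{\psi(x)}$ for a suitable scalar $\mu\neq 0$. A direct check using \eqref{star}/\eqref{J_star} shows the even--odd compatibility holds for every $\mu$, while the odd--odd compatibility reduces to the single requirement $\psi(\gamma(a,b))=\mu^2\,\gamma(\psi(a),\psi(b))$ for all $a,b$. Here I would invoke the known description of $\mathrm{Aut}(Q^\bz)$ for $n\ge 2$: every automorphism is either of conjugation type, $x\mapsto gxg^{-1}$, or of transpose type ($x\mapsto -gx^{T}g^{-1}$ in the Lie case, $x\mapsto gx^{T}g^{-1}$ in the Jordan case). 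For the conjugation type one computes $\gamma(\psi(a),\psi(b))=\psi(\gamma(a,b))$, so $\mu=1$ works; for the transpose type, because transposition reverses the order of factors, one instead finds $\psi(\gamma(a,b))=-\gamma(\psi(a),\psi(b))$, so the requirement becomes $\mu^2=-1$ and is solved by $\mu=\sqrt{-1}$. In either case the resulting $\varphi$ is a parity-preserving bijective homomorphism, hence an automorphism restricting to $\psi$.

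The main obstacle is precisely this last point: the transpose-type automorphisms of $Q^\bz$ force $\mu^2=-1$, which is exactly why the hypothesis $\sqrt{-1}\in\mathbb{F}$ is needed and cannot be dropped. The two ingredients I would verify most carefully are that the centroid of $Q^\bz$ is $\mathbb{F}$ (equivalently, that the module $Q^\bo$ is absolutely irreducible), which is where the standing characteristic assumptions on $\mathbb{F}$ enter, and that the case list for $\mathrm{Aut}(Q^\bz)$ is complete with each transpose-type map genuinely reversing $\gamma$ up to the sign $-1$.
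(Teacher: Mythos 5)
Your proof is correct and follows essentially the same route as the paper: the kernel argument via Schur's Lemma/centroid (module endomorphism of the absolutely simple $Q^\bz$-module $Q^\bo$ must be a scalar $\lambda$, and the nonzero odd--odd product forces $\lambda^2=1$), and surjectivity by extending conjugation-type automorphisms with $\mu=1$ and transpose-type ones with $\mu=\sqrt{-1}$, which is exactly where the hypothesis $\sqrt{-1}\in\mathbb{F}$ enters in the paper as well. The only cosmetic difference is that you verify the extension condition for every automorphism split into the two types, while the paper does it only for a generating set (inner automorphisms and the single outer automorphism $\theta$).
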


\begin{proof}
Suppose $\varphi\in\mathrm{Aut}(Q)$ belongs to the kernel of the restriction map, i.e., $\varphi_\bz=\id$. 
Then $\varphi_\bo$ is an automorphism of $Q^\bo$ as a $Q^\bz$-module, hence $\varphi_\bo=\lambda\,\id$ by Schur's Lemma, where $\lambda$ is a nonzero scalar. 
Since the composition $Q^\bo\times Q^\bo\to Q^\bz$ is nonzero and preserved by $\varphi$, it follows that $\lambda=\pm 1$. Therefore, $\varphi=\id$ or $\varphi=\upsilon$.

It is well known that the automorphism group of $Q^\bz$, which is $\mathfrak{sl}(n+1)\subset M_{n+1}^{(-)}$ in the Lie case and $M_n^{(+)}$ in the Jordan case, is generated by inner automorphisms (i.e., conjugations by invertible elements from the respective matrix algebra) and by an outer automorphism $\theta$, which is $\theta(x) = -x^{t}$ in the Lie case and $\theta(x) = x^{t}$ in the Jordan case. Thus, to prove the surjectivity of the restriction map, we only need to show that the inner automorphisms of $Q^\bz$ and $\theta$ can, indeed, be extended to automorphisms of the whole of $Q$. 
  
In the first case, if $\psi_r(x)=rxr^{-1}$ is an inner automorphism of $Q^\bz$ then we set $\varphi(a + \munderbar{b}) = \psi_r (a) + \underline{\psi_r (b)}$, for all $a,b \in Q^\bz$.
In the second case, we set $\varphi (a + \munderbar{b}) = \theta (a) + \sqrt{-1} \, \underline{\theta (b)}$, for all $a,b \in Q^\bz$.
For both cases, it is straightforward to verify that $\varphi$ is an automorphism. 
\end{proof}

\begin{remark}\label{extensions-of-automorphisms}
The above extensions of inner automorphisms of $Q^\bz$ are the inner automorphisms of $Q$. The extension of $\theta$ has order $4$, as its square equals $\upsilon$. It generates the group of outer automorphisms of $Q$. It follows that $\mathrm{Aut}(Q)$ is isomorphic to the semidirect product of the group of inner automorphisms of $Q^\bz$, which is $\mathrm{PGL}(n+1)$ in the Lie case and $\mathrm{PGL}(n)$ in the Jordan case, and the cyclic group of order $4$. In the Lie superalgebra case this can be found in \cite[Theorem 1]{serganova}.
\end{remark}

\begin{cor}[of the proof]\label{cara-do-isomorfismo}
Let $\varphi\colon Q\rightarrow Q$ be an automorphism. Then there is a scalar $\lambda\in \mathbb{F}^\times$ such that
$\varphi_\bo(\munderbar{x})=\lambda \underline {\varphi_\bz(x)}$ for all $x\in Q^\bz$. In fact, $\lambda^2=1$ if $\varphi_\bz$ is inner and $\lambda^2=-1$ otherwise.\qed
\end{cor}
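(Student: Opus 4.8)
The plan is to extract the desired statement directly from the structural analysis carried out in the proof of Proposition~\ref{Aut-Qn}, rather than to start afresh. The key observation is that, in that proof, the surjective homomorphism $\mathrm{Aut}(Q)\to\mathrm{Aut}(Q^\bz)$ is shown to admit explicit set-theoretic sections: every automorphism of $Q^\bz$ is realized as $\varphi_\bz$ for some $\varphi\in\mathrm{Aut}(Q)$ whose odd part $\varphi_\bo$ is completely determined by $\varphi_\bz$ up to the kernel $\{\id,\upsilon\}$. So first I would fix $\varphi\in\mathrm{Aut}(Q)$ and consider the composition of $\varphi_\bo$ with the module isomorphism $x\mapsto\munderbar{x}$; more precisely I would define the linear map $\psi\colon Q^\bz\to Q^\bz$ by $\munderbar{\psi(x)}=\varphi_\bo(\munderbar{x})$.

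Next I would show that $\psi$ intertwines the $Q^\bz$-action in the sense forced by $\varphi$ being an automorphism. Because $\varphi$ preserves the bracket (or product) between $Q^\bz$ and $Q^\bo$, and because $x\mapsto\munderbar{x}$ is a $Q^\bz$-module isomorphism, the map $\psi$ must be $\varphi_\bz$-semilinear with respect to the module structure: explicitly, $\psi(\mathrm{ad}(a)\,x)=\mathrm{ad}(\varphi_\bz(a))\,\psi(x)$ for all $a,x\in Q^\bz$, where $\mathrm{ad}$ denotes the adjoint action (bracket in the Lie case, $\circ$ in the Jordan case). This says that $\varphi_\bz^{-1}\circ\psi$ commutes with the entire adjoint action of $Q^\bz$ on itself. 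Since $Q^\bz$ is a simple Lie (respectively Jordan) algebra acting irreducibly on itself via this action—this is exactly the irreducibility invoked through Schur's Lemma in the proposition's proof—the endomorphism $\varphi_\bz^{-1}\circ\psi$ is a scalar $\lambda\,\id$ by Schur's Lemma. Hence $\psi=\lambda\,\varphi_\bz$, which is precisely the asserted relation $\varphi_\bo(\munderbar{x})=\lambda\,\underline{\varphi_\bz(x)}$.

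To pin down $\lambda^2$, I would use the remaining piece of structure: the composition $Q^\bo\times Q^\bo\to Q^\bz$, which is nonzero and $\varphi$-equivariant. Applying $\varphi$ to the identity $[\munderbar{a},\munderbar{b}]=(ab+ba)^\trl$ (Lie case, from \eqref{star}) or $\munderbar{a}\circ\munderbar{b}=ab-ba$ (Jordan case, from \eqref{J_star}) and comparing with the action on $Q^\bz$ produces a factor $\lambda^2$ on one side against a factor matching $\varphi_\bz$ on the other. When $\varphi_\bz=\psi_r$ is inner, the explicit extension $\varphi(a+\munderbar{b})=\psi_r(a)+\underline{\psi_r(b)}$ from the proof has $\lambda=1$, and composing with an arbitrary kernel element only changes the sign, giving $\lambda^2=1$; when $\varphi_\bz$ is outer, the explicit extension uses $\sqrt{-1}$, giving $\lambda=\pm\sqrt{-1}$ and hence $\lambda^2=-1$. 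This matches the dichotomy claimed in the corollary.

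The main obstacle I anticipate is the bookkeeping of signs and of the $\trl$-correction in the Lie case: one must verify that the semilinearity computation goes through cleanly despite the traceless projection appearing in $[\munderbar{a},\munderbar{b}]=(ab+ba)^\trl$, and that outer versus inner is correctly correlated with $\lambda^2=\mp 1$. Since the two explicit extensions are already written down in the proof of Proposition~\ref{Aut-Qn}, the cleanest route is to reduce the general $\varphi$ to one of these two standard forms (modulo the kernel $\{\id,\upsilon\}$), read off $\lambda$ there, and observe that multiplying by $\upsilon$ sends $\lambda\mapsto-\lambda$, leaving $\lambda^2$ unchanged—which is why the corollary is genuinely a corollary \emph{of the proof} rather than merely of the statement.
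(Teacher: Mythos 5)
Your proposal is correct and follows essentially the same route the paper intends by labelling this a corollary \emph{of the proof} of Proposition~\ref{Aut-Qn}: Schur's Lemma applied to the $Q^\bz$-module $Q^\bo$ produces the scalar $\lambda$, and reducing a general $\varphi$ modulo the kernel $\{\id,\upsilon\}$ to the explicit extensions constructed in that proof (giving $\lambda=\pm1$ when $\varphi_\bz$ is inner and $\lambda=\pm\sqrt{-1}$ when it is not) pins down $\lambda^2$. The only cosmetic difference is that your ``twisted'' Schur argument establishes the existence of $\lambda$ directly, whereas the paper obtains it together with its value from the kernel-plus-explicit-extension reduction; both ingredients are already contained in the proposition's proof, as you note yourself in your final paragraph.
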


%-----------------------------------------------
\section{Graded modules over graded Lie or Jordan algebras}\label{se:graded-modules}

Our approach to gradings on $Q$ will be based on considering $Q^\bo$ as a graded $Q^\bz$-module. We will now review some basic concepts regarding graded modules. A comprehensive treatment of finite-dimensional graded modules over semisimple Lie algebras can be found in \cite{EK15}.

\begin{defi}
Let $G$ be an abelian group. By a \emph{$G$-graded vector space} we mean simply a vector space $V$ together with a vector space decomposition $\Gamma:\,V=\bigoplus_{g\in G} V_{g}$.
If $V=\bigoplus_{g\in G} V_{g}$ and $W\bigoplus_{g\in G} W_{g}$ are two graded vector spaces and $T:V\rightarrow W$ is a linear map, we say that $T$ is \emph{homogeneous of degree $h$}, for some $h\in G$, if $T(V_g)\subseteq W_{hg}$ for all $g\in G$.
\end{defi}

If $V$ is a finite-dimensional $G$-graded vector space, we obtain an induced $G$-grading on the associative algebra $\End(V)$ and, since $G$ is abelian, on the Lie algebra $\gl(V)=\End(V)^{(-)}$ and the Jordan algebra $\End(V)^{(+)}$.

\begin{defi}
Let $A= \bigoplus_{g\in G} A_g$ be a $G$-graded algebra (associative, Lie or Jordan) and let $V = \bigoplus_{g\in G} V_g$ be an $A$-module that is also a $G$-graded vector space. We say that $V$ is a \emph{$G$-graded module over $A$} if $A_g\cdot V_h\subseteq V_{gh}$ for all $g,h\in G$.
\end{defi}

\begin{defi}
Let $V$ together with $\Gamma$ be a $G$-graded vector space and let $d$ be an element of $G$. We denote by $\Gamma^{[d]}$ the $G$-grading given by relabeling the component $V_g$ as $V_{gd}$, for all $g \in G$. This is called the \emph{(right) shift of $\Gamma$ by $d$}. The $G$-graded vector space given by $V$ together with this new grading is denoted by $V^{[d]}$.
\end{defi}

\begin{lemma}\label{lemmazero}
Let $V$ be a graded module over a $G$-graded algebra $A$. Then, for any $d \in G$, the shift $V^{[d]}$ is also a graded module over $A$.
\qed
\end{lemma}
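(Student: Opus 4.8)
The plan is to reduce the statement to a single index computation, since it should follow at once from the definitions once the homogeneous components of the shifted space are written out explicitly. First I would unwind the definition of the right shift: relabeling $V_g$ as the component of degree $gd$ means that the degree-$h$ component of $V^{[d]}$ is $V^{[d]}_h = V_{hd^{-1}}$, obtained by solving $gd = h$ for $g$. This is the only point requiring care, since a right shift is easy to confuse with a left shift and the placement of $d^{-1}$ is easy to get wrong, so I would record this identification explicitly before proceeding.

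Next I would verify the defining property of a graded module for $V^{[d]}$, namely $A_g \cdot V^{[d]}_h \subseteq V^{[d]}_{gh}$ for all $g,h \in G$. Substituting the identification from the first step, the left-hand side is $A_g \cdot V_{hd^{-1}}$, which is contained in $V_{g(hd^{-1})}$ precisely because $V$ is a graded module over $A$. Using associativity in $G$ we have $g(hd^{-1}) = (gh)d^{-1}$, so the inclusion reads $A_g \cdot V_{hd^{-1}} \subseteq V_{(gh)d^{-1}} = V^{[d]}_{gh}$, as required. Since $V^{[d]}$ has the same underlying $A$-module structure as $V$ and differs only in the labeling of the grading, it remains an $A$-module, and $\Gamma^{[d]}$ is already a $G$-grading of the vector space by the definition of the shift; no further compatibility needs to be checked.

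I do not expect any genuine obstacle here: the entire content is the bookkeeping of group elements, and the sole pitfall is an index error in the direction of the shift or in the position of $d^{-1}$. Once the identification $V^{[d]}_h = V_{hd^{-1}}$ is fixed, the inclusion is a one-line consequence of the graded-module axiom for $V$, and one notes that only associativity of $G$ is actually used, so the argument does not even require the abelian hypothesis.
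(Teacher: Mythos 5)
Your proof is correct, and it is exactly the routine verification the paper leaves to the reader (the lemma is stated with a \qed and no proof). The identification $V^{[d]}_h = V_{hd^{-1}}$ and the one-line inclusion $A_g \cdot V_{hd^{-1}} \subseteq V_{(gh)d^{-1}}$ are precisely what the authors deem immediate; your closing observation that a \emph{right} shift only needs associativity of $G$ is also accurate.
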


The next result is the converse of the above lemma in a special case. Recall that a simple module is said to be \emph{absolutely simple} if it remains simple upon the extension of scalars to the algebraic closure.

\begin{prop}\label{lemmaone}
Let $A$ be a $G$-graded algebra and let $V$ be a finite-di\-men\-sional (ungraded) absolutely simple $A$-module. If $\Gamma$ and $\Gamma'$ are two $G$-gradings that make $V$ a graded module over $A$ then $\Gamma'$ is a shift of $\Gamma$.
\end{prop}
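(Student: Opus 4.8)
The plan is to encode the two gradings $\Gamma$ and $\Gamma'$ as actions of the character group $\widehat G$ on $V$ and to compare them using Schur's Lemma, exploiting absolute simplicity to force the comparison maps to be scalars. First I would recall that a $G$-grading on $V$ compatible with the module structure is the same datum as the induced decomposition under the action of $\widehat G$: for each $\chi\in\widehat G$ we have an operator $\rho(\chi)\in\End(V)$ acting on $V_g$ as multiplication by $\chi(g)$, and the compatibility $A_g\cdot V_h\subseteq V_{gh}$ says precisely that $\rho(\chi)$ intertwines the $A$-action up to the grading of $A$. More concretely, if $\eta\colon\widehat G\to\mathrm{Aut}(A)$ is the action coming from the $G$-grading on $A$, then both $\Gamma$ and $\Gamma'$ make $V$ into a module on which $\widehat G$ acts by $\rho$ and $\rho'$ respectively, satisfying $\rho(\chi)(x\cdot v)=\eta(\chi)(x)\cdot\rho(\chi)(v)$ and likewise for $\rho'$. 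The point is that these are two ways of lifting the single automorphism $\eta(\chi)$ of $A$ to a semilinear action on $V$.

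The key step is then to fix $\chi\in\widehat G$ and consider the operator $f_\chi:=\rho'(\chi)\circ\rho(\chi)^{-1}\in\End(V)$. A direct computation using the two intertwining relations shows that $f_\chi$ commutes with the $A$-action: for $x\in A_g$ and $v\in V$, both $\rho(\chi)$ and $\rho'(\chi)$ twist the action by the \emph{same} automorphism $\eta(\chi)$, so these twists cancel and $f_\chi(x\cdot v)=x\cdot f_\chi(v)$. Thus $f_\chi$ is an $A$-module endomorphism of $V$. Since $V$ is absolutely simple, $\End_A(V)=\mathbb{F}\,\id$ by Schur's Lemma (this is exactly where absolute simplicity is needed, to guarantee that the endomorphism algebra is the ground field and not a larger division algebra). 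Hence $f_\chi=\mu(\chi)\,\id$ for a nonzero scalar $\mu(\chi)\in\mathbb{F}^\times$, i.e.\ $\rho'(\chi)=\mu(\chi)\,\rho(\chi)$ for every $\chi\in\widehat G$.

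The final step is to translate the multiplicative datum $\mu\colon\widehat G\to\mathbb{F}^\times$ back into a shift by a group element. Comparing $\rho'(\chi_1\chi_2)=\mu(\chi_1\chi_2)\rho(\chi_1\chi_2)$ with the product of the two relations $\rho'(\chi_i)=\mu(\chi_i)\rho(\chi_i)$ shows that $\mu$ is a homomorphism $\widehat G\to\mathbb{F}^\times$, that is, $\mu\in\widehat{\widehat G}$. Using the natural identification of $G$ with (a subgroup of) $\widehat{\widehat G}$ — or, more carefully, working on the finitely generated subgroup of $G$ generated by $\supp(\Gamma)\cup\supp(\Gamma')$, where this duality is an isomorphism — there is an element $d\in G$ such that $\mu(\chi)=\chi(d)$ for all $\chi$. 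Then $\rho'(\chi)$ acts on $V_g$ (the $\Gamma$-component) by $\chi(d)\chi(g)=\chi(gd)$, which means exactly that $V_g\subseteq V'_{gd}$; comparing dimensions gives $V'_{gd}=V_g$, so $\Gamma'=\Gamma^{[d]}$ is the shift of $\Gamma$ by $d$, as claimed.

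I expect the main obstacle to be the last step, the duality bookkeeping: one must be careful that $\mu$ comes from an honest element $d\in G$ and not merely from the double dual, which requires restricting attention to a suitable finitely generated (indeed finite, after passing to the relevant quotient) subgroup of $G$ so that $G\to\widehat{\widehat G}$ is surjective onto the characters that arise. A secondary subtlety is the identification of $G$-gradings with $\widehat G$-actions in arbitrary characteristic: if $\mathrm{char}\,\mathbb{F}$ is positive one should phrase the argument directly in terms of the operators $\rho(\chi)$ and the intertwining relations rather than invoking eigenspace decompositions, so that the Schur's Lemma argument on $f_\chi$ remains valid without any semisimplicity assumption on the $\widehat G$-action.
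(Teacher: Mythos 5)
Your reduction of the problem to a comparison of the two $\widehat G$-actions has a genuine gap: the dictionary between $G$-gradings on $V$ and actions of the character group $\widehat G=\mathrm{Hom}(G,\mathbb{F}^\times)$, on which the entire argument rests, is faithful only when $\mathbb{F}$ has enough roots of unity --- essentially, when $\mathbb{F}$ contains primitive roots of unity of the orders of the torsion elements involved and $\mathrm{char}\,\mathbb{F}$ does not divide those orders. The proposition carries no such hypotheses, and the paper needs it without them: it is invoked in the proof of Theorem \ref{th:assoc}, which is asserted to hold over an arbitrary field. Concretely, take $G=\mathbb{Z}_p$ with $\mathrm{char}\,\mathbb{F}=p$, or $G=\mathbb{Z}_3$ with $\mathbb{F}=\mathbb{Q}$: then $\widehat G$ is trivial, every operator $\rho(\chi)$ is the identity, $\mu$ is trivial, and your argument terminates with no information about $\Gamma$ and $\Gamma'$ --- indeed it would appear to ``prove'' $\Gamma'=\Gamma$, which is false in general. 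Your closing paragraph flags characteristic as a secondary subtlety, but the proposed fix (phrasing everything in terms of the operators $\rho(\chi)$ rather than eigenspaces) does not address the real problem, which is that there may be too few characters for the operators $\rho(\chi)$ to encode the gradings at all. The same defect afflicts the duality step: for a finite abelian group $H$, the natural map $H\to\widehat{\widehat H}$ is an isomorphism only when $\mathbb{F}$ contains a primitive root of unity of order equal to the exponent of $H$, so even granting $\rho'(\chi)=\mu(\chi)\rho(\chi)$ for all $\chi$ you cannot in general produce $d\in G$ with $\mu(\chi)=\chi(d)$.

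In the restricted setting where characters do separate points (e.g.\ $\mathbb{F}$ algebraically closed of characteristic $0$), your computations are correct --- the intertwining relation, the fact that $f_\chi$ is an $A$-module endomorphism, and the use of absolute simplicity to force $f_\chi$ to be scalar are all sound --- and this gives a genuinely different route from the paper's. The characteristic-free repair of your idea is to replace $\widehat G$ by the diagonalizable group scheme $\operatorname{Spec}\mathbb{F}G$ (equivalently, to treat a $G$-grading as an $\mathbb{F}G$-comodule structure), whose character group is exactly $G$ in every characteristic; the Schur argument can then be redone functorially. The paper avoids this machinery altogether: absolute simplicity plus the Density Theorem make the representation $A\to\End(V)$ surjective, so $\Gamma$ and $\Gamma'$ induce the \emph{same} grading on $\End(V)$, and then a lemma on graded modules over the graded algebra $\End(V)$ (via minimal graded left ideals, Lemma 2.7 of \cite{EKmon}) shows that two gradings on $V$ inducing the same grading on $\End(V)$ differ by a shift. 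That argument is valid over any field and for any abelian $G$, which is what the statement requires.
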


\begin{proof}
The Lie and Jordan cases reduce to the associative case by considering the universal enveloping algebra. 
So, let $A$ be an associative algebra.
Since $V$ is finite-dimensional, the Density Theorem implies that the representation $\rho\colon A\to\End(V)$ is surjective.
It will be convenient to denote $V$ equipped with the gradings $\Gamma$ and $\Gamma'$ by $V_{\Gamma}$ and $V_{\Gamma'}$, respectively.
Since $\rho$ is a surjective homomorphism of graded algebras, we have $\End(V_{\Gamma})_g=\rho(A_g)=\End(V_{\Gamma'})_g$ for all $g\in G$.
The result follows from the next lemma.
\end{proof}

\begin{lemma}
Let $V$ be a finite-dimensional vector space and let $\Gamma$, $\Gamma'$ be $G$-gradings on $V$ that induce the same grading on $\End(V)$. Then $\Gamma'$ is a shift of $\Gamma$.
\end{lemma}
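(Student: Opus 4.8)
The plan is to pin down $\Gamma'$ by choosing a homogeneous basis adapted to $\Gamma$ and tracking what the hypothesis ``same induced grading on $\End(V)$'' forces. First I would fix a basis $v_1,\dots,v_n$ of $V$ consisting of $\Gamma$-homogeneous vectors, say $v_i\in V_{g_i}$, and bring in the associated matrix units $E_{ij}$ (defined by $E_{ij}v_k=\delta_{jk}v_i$). A direct check shows each $E_{ij}$ is homogeneous of degree $g_ig_j^{-1}$ for the $\Gamma$-induced grading on $\End(V)$. Since by hypothesis the $\Gamma'$-induced grading on $\End(V)$ is literally the same vector space decomposition, each $E_{ij}$ is also $\Gamma'$-homogeneous, \emph{with the same degree} $g_ig_j^{-1}$. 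This is the one place where the hypothesis enters.

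The crucial step is to deduce that the very same vectors $v_i$ are homogeneous for $\Gamma'$ as well. Each diagonal idempotent $E_{ii}$ has degree $e$ in both gradings, so it is a grading-preserving operator for $\Gamma'$; hence its image $E_{ii}V=\mathbb{F}v_i$ is a $\Gamma'$-graded subspace. Being one-dimensional, $\mathbb{F}v_i$ must be concentrated in a single $\Gamma'$-component, so $v_i\in V'_{h_i}$ for some $h_i\in G$. Thus $v_1,\dots,v_n$ is simultaneously a homogeneous basis for both gradings, and $\Gamma'$ is completely described by the degrees $h_i$.

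It then remains to show $h_i=g_id$ for a single $d\in G$. Here I would use the off-diagonal units: since $E_{ij}$ is $\Gamma'$-homogeneous of degree $g_ig_j^{-1}$ and $E_{ij}v_j=v_i$, applying $E_{ij}$ to $v_j\in V'_{h_j}$ forces $v_i\in V'_{g_ig_j^{-1}h_j}$, whence $h_i=g_ig_j^{-1}h_j$, that is, $h_ig_i^{-1}=h_jg_j^{-1}$. Because this holds for every pair $(i,j)$ and $G$ is abelian, the element $d:=h_ig_i^{-1}$ is independent of $i$, so $h_i=g_id$ for all $i$. Therefore $\Gamma'$ assigns degree $g_id$ to every vector that $\Gamma$ assigns degree $g_i$, which is exactly the relabeling $V_g\mapsto V_{gd}$ defining $\Gamma^{[d]}$; hence $\Gamma'=\Gamma^{[d]}$.

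I do not anticipate a serious obstacle: the argument is elementary once the matrix units attached to a $\Gamma$-homogeneous basis are in play. The only points demanding care are the degree bookkeeping — ensuring the matrix units carry the \emph{same} degree in both gradings (which is precisely where the hypothesis is used) and that the ``defect'' $h_ig_i^{-1}$ is genuinely constant, for which commutativity of $G$ is needed.
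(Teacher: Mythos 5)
Your proof is correct, but it follows a genuinely different route from the paper's. You work with a $\Gamma$-homogeneous basis and the attached matrix units $E_{ij}$: the hypothesis forces each $E_{ij}$ to be $\Gamma'$-homogeneous of the same degree $g_ig_j^{-1}$, the idempotents $E_{ii}$ then show each $v_i$ is $\Gamma'$-homogeneous (the image of a homogeneous operator is a graded subspace), and the off-diagonal units force the ``defect'' $g_i^{-1}h_i$ to be a single element $d$, giving $V_g\subseteq V'_{gd}$ and hence equality by comparing the two decompositions. The paper instead stays basis-free and module-theoretic: it takes a minimal graded left ideal $I$ of $\End(V)$, invokes Lemma 2.7 of \cite{EKmon} to identify both $V_\Gamma$ and $V_{\Gamma'}$ with shifts $I^{[g]}$, $I^{[g']}$, and then notes that the resulting graded $\End(V)$-module isomorphism $V_{\Gamma'}\to (V_\Gamma)^{[g^{-1}g']}$ is a scalar operator by Schur's Lemma, hence preserves every subspace. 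Your argument buys self-containedness --- it uses nothing beyond linear algebra and makes the hypothesis's role completely transparent --- while the paper's buys brevity and coherence with the graded-module machinery (shifts, graded simple modules, Schur's Lemma) that the surrounding section is built on. Two small polish points: your final sentence deserves the one-line dimension count ($V_g\subseteq V'_{gd}$ for all $g$ plus the fact that both families decompose $V$ yields $V_g=V'_{gd}$); and the appeal to commutativity of $G$ can be avoided entirely by defining $d:=g_i^{-1}h_i$ rather than $h_ig_i^{-1}$, since $h_i=g_ig_j^{-1}h_j$ gives $g_i^{-1}h_i=g_j^{-1}h_j$ in any group --- though $G$ is abelian throughout the paper, so nothing is at stake.
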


\begin{proof}
%Again, let us denote $V$ equipped with $\Gamma$ and $\Gamma'$ by $V_{\Gamma}$ and $V_{\Gamma'}$, respectively.
Let $I$ be a minimal graded left ideal of $R=\End(V)$. By Lemma 2.7 of \cite{EKmon}, there exist $g,g' \in G$ such that $V_{\Gamma} \cong I^{[g]}$ and $V_{\Gamma'} \cong I^{[g']}$. Thus, we have an isomorphism $V_{\Gamma'} \rightarrow (V_{\Gamma})^{[g^{-1}g']}$ of graded $R$-modules.
Such an isomorphism must be a scalar operator and, therefore, it leaves all subspaces invariant. We conclude that $\Gamma'$ is a shift of $\Gamma$.
\end{proof}

%-----------------------------------------------
\section{Gradings on $\Sl(n)$ and $M_n^{(+)}$}\label{gradings-on-sln}

Since we are going to reduce the classification of gradings on $Q$ to the same problem for $Q^\bz$, we need to recall some facts about gradings on the Lie algebra $\mathfrak{sl}(n)$ and Jordan algebra $M_n^{(+)}$. 
Gradings by abelian groups on any finite-dimensional algebra $A$ are controlled by the automorphism group scheme of $A$ (see e.g. \cite[\S 1.4]{EKmon}). If $\mathrm{char}\,\mathbb{F}\ne 2$ and $n\ge 3$ ($n>3$ if $\mathrm{char}\,\mathbb{F}=3$) then the automorphism group schemes of $\mathfrak{psl}(n)$ and $M_n^{(+)}$ are isomorphic (see \cite[\S 3.1 and \S 5.6]{EKmon}). 
So we concentrate on the Lie case, since the Jordan case is completely analogous. 

If a grading $\Gamma$ on $S=\mathfrak{sl}(n)$ is the restriction of a grading of $R=M_n$, we say that $\Gamma$ is a \emph{Type I} grading. Otherwise, we say that $\Gamma$ is a \emph{Type II} grading. If $\mathbb{F}$ is algebraically closed, Type I gradings are characterized by the property that the image of $\eta_\Gamma\colon\widehat{G}\to\mathrm{Aut}(S)$ consists of inner automorphisms of $S$. Type II gradings are related to Type I gradings in the following way.

\begin{defi}[\cite{BK10}]
If $\Gamma:\, S = \bigoplus_{g \in G} S_g$ is a $G$-grading of Type II on $S$, then there exists a unique element $h \in G$ of order $2$ such that the coarsening $\overline{\Gamma}$ induced by the quotient map $G\to\overline{G}=G/\langle h \rangle$
is a $\overline{G}$-grading of Type I (see \cite[\S 3.1]{EKmon}). 
Moreover, for any $\chi\in\widehat{G}$, the automorphism $\eta_{\Gamma}(\chi)$ is inner if and only if $\chi(h)=1$.
We call $h$ the \emph{distinguished element} associated to the grading $\Gamma$.
For a Type I grading, it is convenient to define $h=e$.
\end{defi}

The next two lemmas will be crucial for describing gradings on $Q$. First we recall the concept of tensor product of graded spaces.

\begin{defi}
Given two $G$-graded vector spaces $V=\bigoplus_{g\in G} V_g$ and $W=\bigoplus_{g\in G} W_g$, we define their tensor product to be the vector space $V\otimes W$ 
together with the $G$-grading given by $(V \otimes W)_g = \bigoplus_{ab=g} V_{a} \otimes W_{b}$.
\end{defi}

\begin{lemma}\label{jordan-product}
Let $\Gamma$ be a $G$-grading on $\mathfrak{sl}(n)$ and let $h$ be its distinguished element. If we extend $\Gamma$ to a grading of $\mathfrak{gl}(n)$ by declaring the identity matrix to have degree $h$ then the map 
\[
\begin{matrix}
  J : & \mathfrak{gl}(n) \otimes \mathfrak{gl}(n) & \rightarrow & \mathfrak{gl}(n)\\
      & x\otimes y              & \mapsto     & xy+yx\\
\end{matrix}
\]
is homogeneous of degree $h$.
\end{lemma}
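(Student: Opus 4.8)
The plan is to split the verification according to the type of $\Gamma$, as recorded by its distinguished element $h$. Throughout I use the identification $\gl(n)=\Sl(n)\oplus\mathbb{F}1$ (so implicitly $\mathrm{char}\,\mathbb{F}\nmid n$, which is what makes the prescription ``declare $1$ to have degree $h$'' meaningful), and I write $z_g$ for the degree-$g$ component of $z\in\gl(n)$ in the extended $G$-grading. Since $J$ is bilinear, it suffices to treat homogeneous $x\in\gl(n)_a$, $y\in\gl(n)_b$ and show $xy+yx\in\gl(n)_{hab}$.

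If $\Gamma$ is of Type I, then $h=e$ and $\Gamma$ is the restriction of an associative $G$-grading on $M_n$; in that grading the unit lies in the identity component and the components of nonidentity degree lie in $\Sl(n)$, so the extended grading on $\gl(n)$ coincides with this associative grading. Hence $xy\in\gl(n)_{ab}$ and $yx\in\gl(n)_{ba}=\gl(n)_{ab}$ (the group is abelian), giving $xy+yx\in\gl(n)_{ab}=\gl(n)_{hab}$. This case is immediate, and the first use of $\mathrm{char}\,\mathbb{F}\ne 2$ is not yet needed.

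The Type II case is the heart of the matter. Here $h$ has order $2$, and the coarsening $\overline\Gamma$ over $\overline G=G/\langle h\rangle$ is of Type I, hence the restriction of an associative $\overline G$-grading on $M_n$, of which the extended $G$-grading is a refinement. Reading $x,y$ in this coarser grading shows that $xy$ and $yx$ lie in the $\overline G$-component of degree $\pi(ab)$, where $\pi\colon G\to\overline G$; that component equals $\gl(n)_{ab}\oplus\gl(n)_{abh}$, the two $G$-degrees over $\pi(ab)$. Thus $J(x,y)=z_{ab}+z_{abh}$, and everything reduces to showing that the ``wrong'' component $z_{ab}$ vanishes. To get at it, I choose a character $\chi\in\widehat G$ with $\chi(h)=-1$ (possible since $h^2=e$ and $\mathbb{F}^\times$ is divisible). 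By the defining property of $h$, the automorphism $\eta_\Gamma(\chi)$ is outer, hence of the form $\mathrm{Int}(r)\circ\theta$ with $\theta(x)=-x^{t}$ and $r\in\mathrm{GL}(n)$, and I extend it to $\tau\colon M_n\to M_n$, $\tau(x)=-r x^{t} r^{-1}$. The two properties of $\tau$ I will use are that it scales each $\gl(n)_g$ by $\chi(g)$ (on $\Sl(n)$ it is $\eta_\Gamma(\chi)$, and $\tau(1)=-1=\chi(h)\cdot 1$) and that it is anti-multiplicative up to sign, $\tau(uv)=-\tau(v)\tau(u)$.

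The conclusion then follows by computing $\tau\bigl(J(x,y)\bigr)$ in two ways. The scaling property applied to $z_{ab}+z_{abh}$ gives $\tau(J(x,y))=\chi(ab)\bigl(z_{ab}-z_{abh}\bigr)$, using $\chi(h)=-1$; the sign-twisted anti-multiplicativity together with $\tau(x)=\chi(a)x$, $\tau(y)=\chi(b)y$ gives $\tau(J(x,y))=-\tau(y)\tau(x)-\tau(x)\tau(y)=-\chi(ab)\bigl(z_{ab}+z_{abh}\bigr)$. Comparing and cancelling $\chi(ab)$ yields $2z_{ab}=0$, whence $z_{ab}=0$ because $\mathrm{char}\,\mathbb{F}\ne 2$, so $J(x,y)=z_{abh}\in\gl(n)_{hab}$. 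The only genuine obstacle is the Type II case, and within it the decisive device is manufacturing the anti-automorphism $\tau$ whose two incompatible ``symmetries'' force the unwanted component to be $2$-torsion; this is exactly where $\mathrm{char}\,\mathbb{F}\ne 2$ is used. (For small $n$, e.g. $n\le 2$, there are no outer automorphisms, every grading is of Type I, and only the first case occurs.)
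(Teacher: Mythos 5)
Your proof is correct and follows essentially the same route as the paper: split by type, pass to the Type~I coarsening over $G/\langle h\rangle$ to write $xy+yx$ as a sum of components of degrees $ab$ and $abh$, and use a character with $\chi(h)=-1$, realized as (minus) an anti-automorphism of $M_n$, to force the degree-$ab$ component to vanish via $\mathrm{char}\,\mathbb{F}\neq 2$. The one caveat is your appeal to divisibility of $\mathbb{F}^\times$ (and to outer automorphisms having the form $\mathrm{Int}(r)\circ\theta$), which silently assumes $\mathbb{F}$ algebraically closed; since the lemma is stated for arbitrary $\mathbb{F}$ with $\mathrm{char}\,\mathbb{F}\neq 2$, the paper inserts an explicit ``without loss of generality, we may assume $\mathbb{F}$ algebraically closed'' before this step, and your argument needs the same reduction.
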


\begin{proof}
Let $\Delta$ be the indicated extension of $\Gamma$ to the Lie algebra $\gl(n)$. If $\Gamma$ is a Type I grading then $\Delta$ is actually a grading on the associative algebra $R=M_n$ and hence the map $J$ is homogeneous of degree $h=e$.  

Now suppose that $\Gamma$ is a Type II grading and consider its extension $\Delta:\,R=\bigoplus_{g\in G}R_g$, which is a grading on the Lie algebra $\gl(n)$ but not on the associative algebra $M_n$. Without loss of generality, we may assume $\mathbb{F}$ algebraically closed. Let $\overline{G}=G/\langle h \rangle$ and consider the coarsening 
$\overline{\Delta}:\,R=\bigoplus_{\bar{g}\in\overline{G}}R_{\bar{g}}$
induced by the quotient map $G\to\overline{G}$, i.e., $R_{\bar{g}}=R_g\oplus R_{gh}$. If $x\in R_a$ and $y\in R_b$ for some $a,b\in G$ then 
\[
xy \in R_{\overline{ab}}=R_{ab} \oplus R_{abh},
\] 
since $\overline{\Delta}$ is a grading on the associative algebra $R$. 
Hence we can write $xy= z_0 + z_1$ with $z_0 \in R_{ab}$ and $z_1 \in R_{abh}$.
Pick a character $\chi$ of $G$ such that $\eta_{\Gamma}(\chi)$ is not an inner automorphism of $S$. 
Then $-\eta_{\Gamma}(\chi)$ is the restriction of some anti-automorphism $\varphi$ of $R$. 
Since $\chi(h)=-1$, we have $\eta_{\Delta}(\chi)(1_R)=-1_R$ and hence $\varphi=-\eta_{\Delta}(\chi)$.
We compute: 
\[\begin{split}
(-\chi(b)y)(-\chi(a)x) = \varphi(y)\varphi(x) = \varphi(xy) &= \varphi(z_0)+\varphi(z_1) \\&= -\chi(ab)(z_0 + \chi(h)z_1),
\end{split}
\]
hence $yx = -z_0 + z_1$, which implies $xy+yx = 2z_1 \in R_{abh}$, as required. 
\end{proof}

\begin{lemma}\label{lie-product}
Let $\Gamma$ be a $G$-grading on $M_n^{(+)}$ and let $h$ be its distinguished element. Then the map 
\[
\begin{matrix}
  L : & M_n^{(+)} \otimes M_n^{(+)} & \rightarrow & M_n^{(+)}\\
      & x\otimes y              & \mapsto     & xy-yx\\
\end{matrix}
\]
is homogeneous of degree $h$.
\end{lemma}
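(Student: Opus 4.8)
The plan is to mirror the proof of Lemma \ref{jordan-product}, exploiting the duality between the two settings: there a grading on $\Sl(n)$ forced the \emph{symmetric} product $xy+yx$ to be homogeneous, whereas here a grading on $M_n^{(+)}$ should force the \emph{antisymmetric} product $xy-yx$ to be homogeneous. Since the automorphism group schemes of $\mathfrak{psl}(n)$ and $M_n^{(+)}$ are isomorphic, the Type~I/Type~II dichotomy and the notion of distinguished element carry over verbatim to $M_n^{(+)}$, so I would split into these two cases. If $\Gamma$ is of Type~I then it is the restriction of an honest associative grading on $R=M_n$, so $h=e$ and for $x\in R_a$, $y\in R_b$ both $xy$ and $yx$ lie in $R_{ab}$; hence $L(x\otimes y)=xy-yx\in R_{ab}=R_{abe}$ and $L$ is homogeneous of degree $e=h$.

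The substance is the Type~II case. Here $\Gamma:\,M_n^{(+)}=\bigoplus_{g\in G}R_g$ is not associative, but its coarsening $\overline{\Gamma}$ induced by $G\to\overline G=G/\langle h\rangle$, with components $R_{\bar g}=R_g\oplus R_{gh}$, is a Type~I grading, i.e.\ a grading on the associative algebra $R=M_n$. Thus for $x\in R_a$, $y\in R_b$ we have $xy\in R_{\overline{ab}}=R_{ab}\oplus R_{abh}$, and I would write $xy=z_0+z_1$ with $z_0\in R_{ab}$, $z_1\in R_{abh}$. The key step is to pick a character $\chi\in\widehat G$ with $\chi(h)=-1$, so that $\eta_\Gamma(\chi)$ is \emph{not} inner. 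I would then argue that $\varphi:=\eta_\Gamma(\chi)$ is an anti-automorphism of the associative algebra $R$ and compute $\varphi(xy)=\varphi(y)\varphi(x)$ in two ways:
\[
\chi(ab)\,yx=\varphi(y)\varphi(x)=\varphi(xy)=\chi(ab)z_0+\chi(abh)z_1=\chi(ab)(z_0-z_1),
\]
using $\chi(h)=-1$. This gives $yx=z_0-z_1$, whence $xy-yx=2z_1\in R_{abh}$, exactly what is needed for $L$ to be homogeneous of degree $h$.

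The main obstacle --- and the one genuine difference from Lemma \ref{jordan-product} --- is justifying that $\varphi=\eta_\Gamma(\chi)$ is literally an associative anti-automorphism of $M_n$, with \emph{no} sign correction. This is where the Jordan case diverges from the Lie case: the outer automorphism of $M_n^{(+)}$ is the plain transpose $\theta(x)=x^t$ (as used in Proposition \ref{Aut-Qn}), so a non-inner automorphism of $M_n^{(+)}$ is a conjugation composed with transposition, which is an anti-automorphism of the associative product on the nose --- unlike the Lie case, where $\theta(x)=-x^t$ forced one to pass to $-\eta_\Gamma(\chi)$. It is precisely this absence of the sign twist that turns the antisymmetric product $xy-yx$, rather than the symmetric one, into the homogeneous map, and once this identification is secured the remaining computation is routine.
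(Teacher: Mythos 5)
Your proof is correct and follows essentially the same route as the paper's: the Type~I/Type~II split, the coarsening $R_{\bar g}=R_g\oplus R_{gh}$, the decomposition $xy=z_0+z_1$, a character with $\chi(h)=-1$, and the anti-automorphism computation yielding $yx=z_0-z_1$ are exactly the paper's argument. Your explicit justification that $\eta_\Gamma(\chi)$ is an honest anti-automorphism of $M_n$ (no sign twist, unlike the Lie case) is a point the paper asserts without elaboration, and it is the right reason.
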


\begin{proof}
If $\Gamma$ is a Type I grading then $\Gamma$ is actually a grading on the associative algebra $R=M_n$ and hence the map $L$ is homogeneous of degree $h=e$.  

Now suppose that $\Gamma$ is a Type II grading. Let $\overline{G}=G/\langle h \rangle$ and consider the coarsening $\overline{\Gamma}:\,R=\bigoplus_{\bar{g}\in\overline{G}}R_{\bar{g}}$ induced by the quotient map $G\to\overline{G}$. If $x\in R_a$ and $y\in R_b$ for some $a,b\in G$ then 
\[
xy \in R_{\overline{ab}}=R_{ab} \oplus R_{abh},
\] 
since $\overline{\Gamma}$ is a grading on the associative algebra $R$. Hence we can write $xy= z_0 + z_1$ with $z_0 \in R_{ab}$ and $z_1 \in R_{abh}$. Pick a character $\chi$ of $G$ such that $\eta_{\Gamma}(\chi)$ is not an inner automorphism. Then $\eta_{\Gamma}(\chi)=\varphi$, an anti-automorphism of $R$. We compute: 
\[
(\chi(b)y)(\chi(a)x) = \varphi(y)\varphi(x) = \varphi(xy) = \varphi(z_0)+\varphi(z_1) = \chi(ab)(z_0 + \chi(h)z_1),
\]
hence $yx = z_0 - z_1$, which implies $xy-yx = 2z_1 \in R_{abh}$, as required. 
\end{proof}

%-----------------------------------------------
\section{Gradings on $Q(n)$}\label{se:gradings-on-Qn}

Now we are going to classify group gradings on the Lie and Jordan superalgebras $Q=Q(n)$, $n\ge 2$, under the assumption $\mathrm{char}\,\mathbb{F}\ne 2$ and, in addition, $\mathrm{char}\,\mathbb{F}$ does not divide $n+1$ in the Lie case. It will be convenient to use the following notation.

Let $V$ and $W$ be vector spaces with $G$-gradings $\Gamma:\,V=\bigoplus_{g\in G} V_g$ and 
$\Delta:\,W=\bigoplus_{g\in G} W_g$.  
We will denote by $\Gamma \oplus \Delta$ the $G$-grading on the superspace $V\oplus W$, where $V$ is regarded as the even part and $W$ as the odd part, given by $(V\oplus W)_g=V_g\oplus W_g$.
 
Recall from Section \ref{se:the-lie-superalgebra-Qn} the isomorphism $Q^\bz\to Q^\bo$ of $Q^\bz$-modules, which we denoted by $x\mapsto\munderbar{x}$. Given a $G$-grading $\Gamma$ on the vector space $Q^\bz$, we denote by $\underline{\Gamma}$ the image of $\Gamma$ under this isomorphism, i.e., $\underline{\Gamma}$ is given by $Q^\bo_g=\{\munderbar{x}\;|\;x\in Q^\bz_g\}$ for all $g\in G$.

We are ready to describe all possible $G$-gradings on $Q$.

\begin{thm}\label{ida}
Consider the simple Lie or Jordan superalgebra $Q=Q(n)$, $n\geq 2$. 
Let $\Gamma$ be a $G$-grading on $Q^\bz$, for some abelian group $G$, and let $h\in G$ be the distinguished element associated to $\Gamma$. 
Then the $G$-gradings on $Q$ extending $\Gamma$ are precisely the gradings of the form $\Gamma \oplus \underline{\Gamma}^{[d]}$, where $d\in G$ is such that $d^2=h$.
\end{thm}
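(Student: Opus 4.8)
\section*{Proof proposal}

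The plan is to argue in two directions: first, that each grading $\Gamma\oplus\underline{\Gamma}^{[d]}$ with $d^2=h$ is a genuine $G$-grading on the superalgebra $Q$; second, that every $G$-grading on $Q$ restricting to $\Gamma$ on $Q^\bz$ arises in this way. In both directions the three products \eqref{star} (respectively \eqref{J_star}) must be analyzed, and the whole argument hinges on tracking the degree of the odd--odd product.

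For the sufficiency direction I would check homogeneity of each product. The even--even product is the product on $Q^\bz$, which respects $\Gamma$ by hypothesis. The even--odd product is the module action $a\otimes\munderbar{b}\mapsto\underline{[a,b]}$ (resp. $\underline{a\circ b}$); since $\underline{\Gamma}$ makes $Q^\bo\cong Q^\bz$ a graded module over $Q^\bz$, Lemma \ref{lemmazero} guarantees that the shift $\underline{\Gamma}^{[d]}$ is again a graded module, so this product is homogeneous for \emph{every} $d$. The crux is the odd--odd product. In the Lie case $[\munderbar{a},\munderbar{b}]=(ab+ba)^\trl$ is, after identifying $Q^\bz$ with $\mathfrak{sl}(n+1)$ and extending $\Gamma$ to $\gl(n+1)$ by assigning the identity degree $h$, the image under $z\mapsto z^\trl$ of the map $J$ of Lemma \ref{jordan-product}. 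Since $z\mapsto z^\trl$ is homogeneous of degree $e$ for that extended grading, Lemma \ref{jordan-product} yields $[\munderbar{a},\munderbar{b}]\in Q^\bz_{gg'h}$ for $a\in Q^\bz_g$, $b\in Q^\bz_{g'}$. The Jordan case is entirely analogous, using the map $L$ of Lemma \ref{lie-product} since $\munderbar{a}\circ\munderbar{b}=ab-ba$.

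A short bookkeeping computation then completes this direction. If $\munderbar{a},\munderbar{b}$ have degrees $k,k'$ in $\underline{\Gamma}^{[d]}$, then $a\in Q^\bz_{kd^{-1}}$ and $b\in Q^\bz_{k'd^{-1}}$, so the odd--odd product lands in $Q^\bz_{kk'd^{-2}h}$. This equals the required component $Q^\bz_{kk'}$ precisely when $d^{-2}h=e$, that is, when $d^2=h$, which is exactly how the distinguished element enters. For the completeness direction, let $\Gamma_Q$ be any $G$-grading on $Q$ restricting to $\Gamma$ on $Q^\bz$. Its restriction to $Q^\bo$ is a $G$-grading $\Gamma'$ making $Q^\bo$ a graded $Q^\bz$-module via the even--odd product. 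Now $Q^\bo\cong Q^\bz$ is the adjoint module of the simple Lie algebra $\mathfrak{sl}(n+1)$ (resp. the regular module of the simple Jordan algebra $M_n^{(+)}$), and it is absolutely simple, since its submodules are the ideals of the simple algebra $Q^\bz$ and simplicity persists over the algebraic closure. As $\underline{\Gamma}$ is another grading making $Q^\bo$ a graded $Q^\bz$-module, Proposition \ref{lemmaone} forces $\Gamma'=\underline{\Gamma}^{[d]}$ for some $d\in G$. Finally, since $\Gamma_Q$ is a grading on the entire superalgebra, its odd--odd product is homogeneous, and the computation above shows this requires $d^2=h$.

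I expect the main obstacle to be the careful tracking of the degree shift by $h$ in the odd--odd product: one must pass cleanly between the grading on $Q^\bz$, the induced grading on $\mathfrak{sl}(n+1)$ (or $M_n^{(+)}$), and its unital envelope in which $1$ carries degree $h$, and verify that removing the trace (the operation $\trl$ in the Lie case) does not disturb homogeneity. The other point requiring care is establishing absolute simplicity of $Q^\bo$ as a $Q^\bz$-module, which is what licenses the application of Proposition \ref{lemmaone}.
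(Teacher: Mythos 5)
Your proposal is correct and follows essentially the same route as the paper's proof: homogeneity of the even--even and even--odd products via Lemma \ref{lemmazero}, the odd--odd product handled through the extension of $\Gamma$ to $\gl(n+1)$ (resp.\ $M_n$) and Lemmas \ref{jordan-product}/\ref{lie-product}, and completeness via Proposition \ref{lemmaone} applied to $Q^\bo$ as a graded $Q^\bz$-module. The only difference is presentational---you track degrees elementwise where the paper composes homogeneous maps---and your explicit verification that $Q^\bo$ is absolutely simple and that $\trl$ has degree $e$ fills in details the paper leaves implicit.
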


\begin{proof}
First we show that if $d^2=h$ then $\Gamma \oplus \underline{\Gamma}^{[d]}$ is, indeed, a grading on $Q$, i.e., 
the product $[,]\colon Q\otimes Q\to Q$ (respectively, $\circ\colon Q\otimes Q\to Q$) is a homogeneous map of degree $e$ with respect to this grading. 
It is sufficient to consider the restrictions $Q^\bz\otimes Q^\bz\to Q^\bz$, $Q^\bz\otimes Q^\bo\to Q^\bo$ and $Q^\bo\otimes Q^\bo\to Q^\bz$.

The case of $Q^\bz\otimes Q^\bz\to Q^\bz$ is clear since $\Gamma$ is a grading on the Lie (respectively, Jordan) algebra $Q^\bz$.
In the second case, we observe that $Q^\bo$ equipped with $\underline{\Gamma}$ is a graded $Q^\bz$-module (by the definition of $\underline{\Gamma}$) 
and then apply Lemma~\ref{lemmazero}. For the third case, we will use the realization of $Q$ given by \eqref{identification} or \eqref{J_identification}. 

In the Lie case, according to \eqref{star}, the map
$[,]\colon Q^\bo\otimes Q^\bo \rightarrow Q^\bz$ is the composition of the following four maps:
\[
Q^\bo\otimes Q^\bo\stackrel{\beta}{\longrightarrow} Q^\bz\otimes Q^\bz\hookrightarrow \frg\otimes\frg\stackrel{J}{\longrightarrow}\frg\stackrel{\trl}{\longrightarrow}Q^\bz,
\]
where $\beta$ is an isomorphism given by $\munderbar{x}\otimes \munderbar{y} \mapsto x\otimes y$, $\frg=\gl(n+1)$ and $J(x\otimes y)=xy+yx$. As in Lemma \ref{jordan-product}, we extend the grading $\Gamma$ to a grading on $\frg$ by declaring the identity matrix to have degree $h$. 
Then $J$ is a homogeneous map of degree $h$. Also, $\beta$ is homogeneous of degree $d^{-2}=h^{-1}$, while the inclusion $Q^\bz\hookrightarrow\frg$ and projection 
$\frg\stackrel{\trl}{\to}Q^\bz$ are homogeneous of degree $e$. It follows that $[,]\colon Q^\bo\otimes Q^\bo \rightarrow Q^\bz$ has degree $e$, as desired.

The Jordan case is easier. According to \eqref{J_star}, the map
$\circ\colon Q^\bo\otimes Q^\bo \rightarrow Q^\bz$ is the composition of the following two maps:
\[
Q^\bo\otimes Q^\bo\stackrel{\beta}{\longrightarrow} Q^\bz\otimes Q^\bz
%= M_n^{(+)}\otimes M_n^{(+)}
\stackrel{L}{\longrightarrow}
%M_n^{(+)}= 
Q^\bz,
\]
where $\beta$ is an isomorphism given by $\munderbar{x}\otimes \munderbar{y} \mapsto x\otimes y$ and $L(x\otimes y)=xy-yx$. 
By Lemma \ref{lie-product}, $L$ is a homogeneous map of degree $h$. Also, $\beta$ is homogeneous of degree $d^{-2}=h^{-1}$. It follows that $\circ\colon Q^\bo\otimes Q^\bo \rightarrow Q^\bz$ has degree $e$, as desired. 
 
It remains to prove that all extensions of the grading $\Gamma$ on $Q^\bz$ to the superalgebra $Q$ are of the indicated form. 
Since $Q^\bo$ has to be a graded $Q^\bz$-module, we can apply Proposition \ref{lemmaone} to conclude that the grading on $Q$ must have the form 
$\Gamma \oplus \underline{\Gamma}^{[d]}$ for some $d\in G$. 
The above calculation of the degree of the product $Q^\bo\otimes Q^\bo \rightarrow Q^\bz$ shows that $d^2=h$.
\end{proof}

\begin{remark}\label{bracket_semiinv}
In the case of the Lie superalgebra $Q(n)$, we have just observed the following general phenomenon. Let $L=L^\bz\oplus L^\bo$ be a classical simple Lie superalgebra over an algebraically closed field of characteristic $0$. If $L$ is not isomorphic to $D(2,1,\alpha)$, it is shown in the proof of Proposition 2.1.4 of \cite{artigokac} that, with fixed Lie bracket on $L^\bz$ and $L^\bz$-module structure on $L^\bo$, 
the space of symmetric maps $L^\bo\otimes L^\bo\to L^\bz$ that make $L$ a Lie superalgebra has dimension $1$. It follows that if $L^\bz$ and $L^\bo$ are given $G$-gradings such that $L^\bz$ is a graded algebra and $L^\bo$ is a graded $L^\bz$-module then the product $[,]\colon L^\bo\otimes L^\bo\to L^\bz$ is automatically a homogeneous map of some degree, which we computed to be $h$ in the case of the gradings $\Gamma$ and $\underline{\Gamma}$ on $Q^\bz$ and $Q^\bo$, respectively.
\end{remark}

Note that if $\Gamma$ is a Type I grading on $Q^\bz$ then we can always extend it to $Q$: for example, $\Gamma \oplus \underline{\Gamma}$ does the job. 
But our theorem also shows that this is not the case for Type II gradings:

\begin{cor}
If $G$ does not have elements of order $4$ then every $G$-grading on $Q$ restricts to a Type I grading on $Q^\bz$.
\end{cor}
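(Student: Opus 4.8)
The plan is to argue by contrapositive, reading off everything from Theorem \ref{ida}. Given any $G$-grading on $Q$, its restriction to $Q^\bz$ is some $G$-grading $\Gamma$ with distinguished element $h$, and Theorem \ref{ida} tells us that the grading on $Q$ must have the form $\Gamma \oplus \underline{\Gamma}^{[d]}$ for some $d\in G$ satisfying $d^2=h$. Thus the existence of the given grading on $Q$ guarantees the existence of such an element $d$.

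Suppose, for contradiction, that $\Gamma$ is a Type II grading on $Q^\bz$. By definition of the distinguished element, $h$ then has order $2$. From $d^2=h$ we get $d^4=h^2=e$, so the order of $d$ divides $4$. On the other hand, $d^2=h\ne e$ rules out $d$ having order $1$ or $2$. Hence $d$ has order exactly $4$, contradicting the hypothesis that $G$ has no elements of order $4$. Therefore $\Gamma$ must be Type I.

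There is no serious obstacle here: once Theorem \ref{ida} is in hand, the entire argument is an elementary order computation in $G$. The only point that requires a moment of care is the passage from the two relations $d^2=h$ and $h^2=e$ to the conclusion that $d$ has order precisely $4$ (rather than merely order dividing $4$), which is exactly where the Type II hypothesis $h\ne e$ is used.
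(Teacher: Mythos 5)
Your proof is correct and is essentially the paper's own argument: if the restriction were of Type II, its distinguished element $h$ would have order $2$, forcing the element $d$ with $d^2=h$ (whose existence is guaranteed by Theorem \ref{ida}) to have order exactly $4$. The paper states this in one line; you have merely spelled out the elementary order computation.
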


\begin{proof}
If the grading on $Q^\bz$ is of Type II then the distinguished element $h$ has order $2$ and hence $d$ must have order $4$. 
\end{proof}

The element $d$ (or its inverse) plays the same role for a $G$-grading on $Q$ as the element $h$ for a grading on $Q^\bz$. 
Indeed, a character $\chi\in\widehat{G}$ acts on $Q$ as the automorphism $\varphi=\eta_{\Gamma\oplus\underline{\Gamma}^{[d]}}(\chi)$, which satisfies 
$\varphi_\bo(\munderbar{x})=\chi(d)\underline {\varphi_\bz(x)}$ for all $x\in Q^\bz$ (cf. Corollary \ref{cara-do-isomorfismo}). 
In particular, $\chi$ acts as an inner automorphism of $Q$ if and only if $\chi(d)=1$ (see Remark \ref{extensions-of-automorphisms}). 
%and the grading is inner (i.e., all characters act as inner automorphisms) if and only if $d=e$.

The role of $h=d^2$ for a $G$-grading on $Q$ can be seen using the construction of $Q$, described in Section \ref{se:the-lie-superalgebra-Qn}, in terms of the 
simple associative superalgebra $A=R\oplus uR$ where $R=M_{n+1}$ in the Lie case and $R=M_n$ in the Jordan case. 
The inner automorphisms of $Q$ are induced by inner automorphisms of $A$; they form a normal subgroup of index $4$ in $\mathrm{Aut}(Q)$ if $\sqrt{-1}\in\mathbb{F}$.
But there is an intermediate subgroup of index $2$, which consists of the automorphisms of $Q$ induced by all automorphisms of $A$.
A character $\chi\in\widehat{G}$ acts as an automorphism in this intermediate subgroup if and only if $\chi(h)=1$. 
It is easy to see that the characters satisfying $\chi(h)=-1$ act as automorphisms of $Q$ induced by super-anti-automorphisms of $A$ (with the minus sign in the Lie case).

Now we are going to determine when two gradings on $Q$ are isomorphic. 

\begin{thm}\label{th:isomorphism} 
Consider the simple Lie or Jordan superalgebra $Q=Q(n)$, $n\ge 2$. Let $G$ be an abelian group, let 
$\Gamma$ and $\Delta$ be $G$-gradings on $Q^\bz$, and let $c,d\in G$ be such that $\Gamma \oplus\underline{\Gamma}^{[c]}$ and 
$\Delta \oplus\underline{\Delta}^{[d]}$ are gradings on $Q$. Then $\Gamma \oplus\underline{\Gamma}^{[c]}$ and 
$\Delta \oplus\underline{\Delta}^{[d]}$ are isomorphic if and only if $\Gamma$ and $\Delta$ are isomorphic and $c=d$.
\end{thm}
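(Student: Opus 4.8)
The plan is to prove both implications by playing off the even part, which carries the gradings $\Gamma$ and $\Delta$, against Corollary~\ref{cara-do-isomorfismo}, which pins down how any automorphism of $Q$ behaves on the odd part. Throughout I use that $\mathbb{F}$ is algebraically closed, so $\sqrt{-1}\in\mathbb{F}$ and Proposition~\ref{Aut-Qn} is available. Note that an isomorphism of $G$-graded superalgebras preserves parity, so it splits into an even and an odd part, each degree-preserving; this is what lets the even and odd parts be analyzed separately.

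For the direction ($\Rightarrow$), suppose $\varphi\colon Q\to Q$ is an isomorphism of $G$-graded superalgebras from $\Gamma\oplus\underline{\Gamma}^{[c]}$ to $\Delta\oplus\underline{\Delta}^{[d]}$. Its even part $\varphi_\bz$ is an automorphism of $Q^\bz$ sending the degree-$g$ component of $\Gamma$ to that of $\Delta$ for every $g$, which is precisely the assertion that $\Gamma$ and $\Delta$ are isomorphic $G$-gradings. To extract $c=d$ I would track degrees on the odd part: by Corollary~\ref{cara-do-isomorfismo} there is $\lambda\in\mathbb{F}^\times$ with $\varphi_\bo(\munderbar{x})=\lambda\,\underline{\varphi_\bz(x)}$ for all $x\in Q^\bz$. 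Choose $g_0$ with $Q^\bz_{g_0}(\Gamma)\neq 0$ and a nonzero $x$ there. Then $\munderbar{x}$ lies in the degree-$(g_0c)$ component of $\underline{\Gamma}^{[c]}$, so $\varphi_\bo(\munderbar{x})$ lies in the degree-$(g_0c)$ component of $\underline{\Delta}^{[d]}$, forcing $\varphi_\bz(x)\in Q^\bz_{g_0cd^{-1}}(\Delta)$. But $\varphi_\bz$ is degree-preserving, so also $\varphi_\bz(x)\in Q^\bz_{g_0}(\Delta)$, and $\varphi_\bz(x)\neq 0$. A nonzero vector can lie in only one homogeneous component, whence $g_0=g_0cd^{-1}$ and $c=d$.

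For the direction ($\Leftarrow$), assume $\Gamma$ and $\Delta$ are isomorphic via an automorphism $\psi$ of $Q^\bz$ with $\psi(Q^\bz_g(\Gamma))=Q^\bz_g(\Delta)$, and that $c=d$. By Proposition~\ref{Aut-Qn} the restriction map $\mathrm{Aut}(Q)\to\mathrm{Aut}(Q^\bz)$ is surjective, so I would lift $\psi$ to some $\varphi\in\mathrm{Aut}(Q)$ with $\varphi_\bz=\psi$, and Corollary~\ref{cara-do-isomorfismo} gives $\varphi_\bo(\munderbar{x})=\lambda\,\underline{\psi(x)}$ for some $\lambda\in\mathbb{F}^\times$. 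It remains to verify that $\varphi$ carries $\Gamma\oplus\underline{\Gamma}^{[c]}$ to $\Delta\oplus\underline{\Delta}^{[c]}$. On the even part this is immediate from $\psi(Q^\bz_g(\Gamma))=Q^\bz_g(\Delta)$. On the odd part, the degree-$g$ component of $\underline{\Gamma}^{[c]}$ is $\{\munderbar{x}\mid x\in Q^\bz_{gc^{-1}}(\Gamma)\}$, and $\varphi_\bo$ maps it onto $\{\lambda\,\underline{\psi(x)}\mid x\in Q^\bz_{gc^{-1}}(\Gamma)\}=\{\munderbar{y}\mid y\in Q^\bz_{gc^{-1}}(\Delta)\}$, which is exactly the degree-$g$ component of $\underline{\Delta}^{[c]}$. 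Hence $\varphi$ is an isomorphism of $G$-graded superalgebras.

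The computations are short, and I do not expect a serious obstacle. The only points requiring care are the bookkeeping with the shift convention and the observation that the scalar $\lambda$ is irrelevant to degree-tracking, since $\lambda\,\munderbar{y}$ and $\munderbar{y}$ occupy the same homogeneous component. One may also note that the hypotheses are consistent: since isomorphic gradings have the same distinguished element (innerness of $\eta_\Gamma(\chi)$ being preserved under graded isomorphism), $\Gamma\cong\Delta$ gives $h_\Gamma=h_\Delta$, so $c=d$ is compatible with the constraints $c^2=h_\Gamma$ and $d^2=h_\Delta$ coming from Theorem~\ref{ida}.
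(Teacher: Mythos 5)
Your proof is correct and follows essentially the same route as the paper: both directions hinge on Corollary~\ref{cara-do-isomorfismo} (the relation $\varphi_\bo(\munderbar{x})=\lambda\,\underline{\varphi_\bz(x)}$) together with Proposition~\ref{Aut-Qn} for lifting $\psi$ in the converse, with your element-wise degree bookkeeping being just an unwound version of the paper's observation that $\varphi_\bo\circ\beta_c=\lambda\,\beta_d\circ\varphi_\bz$ forces $c=d$ when $\varphi_\bz,\varphi_\bo$ have degree $e$.
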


\begin{proof}
It will be convenient to give two names to the isomorphism $Q^\bz\to Q^\bo$, $x \mapsto \munderbar{x}$, according to what gradings we use. If we consider $\Gamma$ and $\underline{\Gamma}^{[c]}$ on $Q^\bz$ and $Q^\bo$, respectively, then we will call it $\beta_c$, since it will have degree $c$. Analogously, if we consider $\Delta$ and $\underline{\Delta}^{[d]}$, it will have degree $d$ and we will call it $\beta_d$.
  
Suppose $\varphi\colon Q\rightarrow Q$ is an automorphism sending the grading $\Gamma \oplus \underline{\Gamma}^{[c]}$ onto 
$\Delta \oplus \underline{\Delta}^{[d]}$. By Corollary \ref{cara-do-isomorfismo}, we have
\begin{equation}\label{eq:phi0-phi1}
\varphi_\bo\circ \beta_c = \lambda \beta_d\circ \varphi_\bz
\end{equation}
for some $\lambda\in\mathbb{F}^\times$. By our assumption on $\varphi$, both $\varphi_\bz$ and $\varphi_\bo$ have degree $e$ and, hence, Equation \eqref{eq:phi0-phi1} implies $c=d$.

Conversely, suppose $c=d$ and there exists an automorphism $\psi\colon Q^\bz\to Q^\bz$ sending $\Gamma$ onto $\Delta$. By Proposition \ref{Aut-Qn}, there exists an automorphism $\varphi\colon Q\to Q$ such that $\varphi_\bz=\psi$. Now Equation \eqref{eq:phi0-phi1} implies that $\varphi_\bo$ has degree $e$, i.e., sends 
$\underline{\Gamma}^{[c]}$ onto $\underline{\Delta}^{[c]}$.
\end{proof}

For algebraically closed $\mathbb{F}$, the classification of $G$-gradings on $Q^\bz$ up to isomorphism can be found in \cite{BK10} or \cite[\S 3.3]{EKmon} for the Lie case, where $Q^\bz=\Sl(n+1)$, and in \cite[\S 5.6]{EKmon} for the Jordan case, where $Q^\bz=M_n^{(+)}$ (which has the same classification of gradings as $\Sl(n)$ if $n\ge 3$). Together with our Theorems \ref{ida} and \ref{th:isomorphism}, this gives a classification of $G$-gradings on the Lie or Jordan superalgebra $Q(n)$ up to isomorphism. We can also obtain such a classification for the corresponding associative superalgebra $A=M_n\oplus uM_n$, improving the description given in \cite{BS}. The following result is valid over an arbitrary field and reduces the classification to the matrix algebra $M_n$, for which it is known over algebraically closed $\mathbb{F}$ (see \cite{BK10} or \cite[\S 2.3]{EKmon}). For a grading $\Gamma$ on $A^\bz$, it will be convenient to denote by $u\Gamma$ the corresponding grading on $A^\bo=uA^\bz$, i.e., $A^\bo_g=\{ux\;|\;x\in A^\bz_g\}$ for all $g\in G$.

\begin{thm}\label{th:assoc} 
Consider the simple associative superalgebra $A=R\oplus uR$ where $R=M_n$. Then any grading on $A$ by an abelian group $G$ is of the form $\Gamma\oplus u\Gamma^{[c]}$, where $\Gamma$ is a grading on $R$ and $c\in G$ is such that $c^2=e$. This gradings is isomorphic to $\Delta\oplus u\Delta^{[d]}$ if and only if $\Gamma\cong\Delta$ and $c=d$.
\end{thm}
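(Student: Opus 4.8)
The plan is to follow the same strategy as in Theorems \ref{ida} and \ref{th:isomorphism}, treating the odd part $A^\bo=uR$ as a graded module. The one genuinely new point, which I expect to require the most care, is that in contrast to the Lie and Jordan cases the space $A^\bo$ is \emph{not} simple as a one-sided module over $A^\bz=R$ (as a left $R$-module it is a direct sum of $n$ copies of the natural module). I would therefore regard $A^\bo$ as a graded \emph{bimodule} over $R$, equivalently as a graded left module over the enveloping algebra $B=R\otimes R^{\mathrm{op}}$ equipped with the grading induced by $\Gamma$. Since $R=M_n$, we have $B\cong M_{n^2}$ over any field, and the regular bimodule $A^\bo\cong R$ (via $ux\mapsto x$) corresponds to the natural module of $B$, which is absolutely simple. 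This is precisely where the associative case departs from the earlier ones.

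For the description part (the analogue of Theorem \ref{ida}), I would first restrict a given grading on $A$ to $A^\bz$ to obtain a grading $\Gamma$ on the associative algebra $R$. Both the restriction of this grading to $A^\bo$ and the canonical grading $u\Gamma$ make $A^\bo$ a graded $B$-module for the same grading on $B$; by Proposition \ref{lemmaone} they differ by a shift, so the grading on $A^\bo$ is $u\Gamma^{[c]}$ for some $c\in G$. To pin down $c$, I would examine the only remaining product, $A^\bo\otimes A^\bo\to A^\bz$, which here is simply $ux\otimes uy\mapsto xy$ since $u$ is central and $u^2=1$. Because this uses the \emph{same} associative product of $R$, which is homogeneous of degree $e$ for $\Gamma$, a direct degree count settles the condition: if $ux$ has degree $g$ and $uy$ has degree $h$ in $u\Gamma^{[c]}$, then $xy$ has $\Gamma$-degree $ghc^{-2}$, which must equal $gh$, forcing $c^2=e$. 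Conversely, for any $c$ with $c^2=e$, the maps $A^\bz\otimes A^\bo\to A^\bo$ and $A^\bo\otimes A^\bz\to A^\bo$ are homogeneous of degree $e$ by Lemma \ref{lemmazero}, and the same count shows $A^\bo\otimes A^\bo\to A^\bz$ is homogeneous of degree $e$; hence $\Gamma\oplus u\Gamma^{[c]}$ is a grading. Unlike the Lie and Jordan cases, no distinguished element enters here: since $\Gamma$ is an honest associative grading, the associative product of $R$ has degree $e$, which is exactly why the condition reads $c^2=e$ rather than $c^2=h$.

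For the isomorphism part (the analogue of Theorem \ref{th:isomorphism}), I would first record the associative counterparts of Proposition \ref{Aut-Qn} and Corollary \ref{cara-do-isomorfismo}. Writing $\tilde\varphi_\bo$ for the map on $R$ defined by $\varphi_\bo(ux)=u\tilde\varphi_\bo(x)$, the bimodule compatibility of any superalgebra automorphism $\varphi$ of $A$ shows that $\varphi_\bz^{-1}\circ\tilde\varphi_\bo$ is a bimodule endomorphism of $R$, hence a scalar $\lambda$; thus $\varphi_\bo(ux)=\lambda\,u\varphi_\bz(x)$, and evaluating on $(ux)(uy)=xy$ gives $\lambda^2=1$. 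Surjectivity of the restriction map $\mathrm{Aut}(A)\to\mathrm{Aut}(R)$ is cleaner than in the Lie and Jordan settings: by Skolem--Noether every automorphism $\psi$ of $R=M_n$ is inner over an arbitrary field, and $a+ub\mapsto\psi(a)+u\psi(b)$ extends it to $A$. With these facts the argument copies that of Theorem \ref{th:isomorphism}: denoting by $\beta_c\colon x\mapsto ux$ the identification, which has degree $c$ from $\Gamma$ to $u\Gamma^{[c]}$, the relation $\varphi_\bo\circ\beta_c=\lambda\,\beta_d\circ\varphi_\bz$ together with the fact that $\varphi_\bz$ and $\varphi_\bo$ both have degree $e$ forces $c=d$ and exhibits $\Gamma\cong\Delta$. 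Conversely, extending an isomorphism from $\Gamma$ to $\Delta$ and using $c=d$ produces the desired isomorphism of gradings on $A$.
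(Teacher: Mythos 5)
Your proof is correct and takes essentially the same route as the paper: the paper's own proof consists precisely of the observation that $A^\bo\cong R$ as an $R$-bimodule (equivalently, as a simple $R\otimes R^{\mathrm{op}}$-module) via $x\mapsto ux$, followed by the instruction to proceed as in Theorems \ref{ida} and \ref{th:isomorphism}, with the details left to the reader. Your write-up supplies exactly those details correctly: Proposition \ref{lemmaone} applied to the enveloping algebra to get the shift, the degree count on $(ux)(uy)=xy$ forcing $c^2=e$ (with the associative product of degree $e$ playing the role of the degree-$h$ maps $J$ and $L$, so no distinguished element appears), and the scalar/extension argument mirroring Corollary \ref{cara-do-isomorfismo} and Proposition \ref{Aut-Qn} for the isomorphism claim.
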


\begin{proof}
We observe that $A^\bo$ is isomorphic to $R=A^\bz$ as an $R$-bimodule. Indeed, an isomorphism $A^\bz\to A^\bo$ is given by $x\mapsto ux$. Since $A^\bo$ is a simple $R$-bimodule (or, equivalently, $(R\otimes R^\mathrm{op})$-module), we can proceed as in the proofs of Theorems \ref{ida} and \ref{th:isomorphism}. The details are  left to the reader. 
\end{proof}

We would like to point out that the approach based on graded modules over $G$-graded Lie algebras can be used to classify $G$-gradings on other classical simple Lie superalgebras (cf. Remark \ref{bracket_semiinv}) and also on abelian extensions of Lie algebras. 

\begin{ex}\label{pae} 
Let $\frg$ be the semidirect sum of $\mathfrak{psl}(n)$ and its adjoint module. Then any grading on $\frg$ by an abelian group $G$ is of the form $\Gamma\oplus\Gamma^{[c]}$, where $\Gamma$ is a grading on $\mathfrak{psl}(n)$ and $c\in G$. This gradings is isomorphic to $\Delta\oplus\Delta^{[d]}$ if and only if $\Gamma\cong\Delta$ and $c=d$.
\end{ex}

In general, if $\frg$ is any finite-dimensional Lie algebra over an algebraically closed field of characteristic $0$, then $\frg=\frs\oplus\frr$ where $\frr$ is the solvable radical of $\frg$ and $\frs$ is a semisimple Levi subalgebra. If $\frg$ is graded by a group $G$ then $\frr$ is $G$-graded and $\frs$ can be chosen $G$-graded (see \cite{Gor}). Note that the isomorphism classes of $\frs$ as a graded algebra and $\frr$ as a graded $\frs$-module are uniquely determined. If $\frr$ is abelian then the classification of $G$-gradings on $\frg$ reduces to classifying $G$-gradings on $\frs$ and, for each such grading, classifying graded $\frs$-modules. The first problem has been solved for simple $\frs$ except for types $E_6$, $E_7$ and $E_8$ (see \cite{EKmon} and also \cite{EK_d4} for $D_4$) and the second problem for any semisimple $\frs$ and abelian $G$ (see \cite{EK15}), but the relevant invariants have been explicitly computed only for simple $\frs$ different from $E_6$ and $E_7$ (for $D_4$ and $E_8$, see the Appendix in \cite{EK_d4}). 

To conclude this paper, we will show that the classification of fine gradings up to equivalence on the Lie (respectively, Jordan) superalgebra $Q$ 
is the same as for the Lie algebra $Q^\bz=\Sl(n+1)$ (respectively, the Jordan algebra $Q^\bz=M_n^{(+)}$), which can be found in \cite{Eld10} or \cite[\S 3.3]{EKmon} (respectively, in \cite[\S 5.6]{EKmon}).
 
We will need the following notation. Let $G$ be an abelian group and let $h\in G$. We denote by $G[h^{1/2}]$ the abelian group generated by $G$ and a new element $d$ subject only to the relation $d^2=h$. We will also denote $d$ by $h^{1/2}$. 

\begin{thm}\label{th:fine-gradings}
Consider the simple Lie or Jordan superalgebra $Q=Q(n)$, $n\ge 2$. If $\Gamma$ is a fine grading on $Q^\bz$ with universal group $G$ then $\tilde{\Gamma}=\Gamma\oplus\underline{\Gamma}^{[h^{1/2}]}$ is a fine grading on $Q$ with universal group $G[h^{1/2}]$, and every fine grading on $Q$ has this form (if we use its universal group). Moreover, $\tilde{\Gamma}$ and $\tilde{\Delta}$ are equivalent if and only if $\Gamma$ and $\Delta$ are equivalent.
\end{thm}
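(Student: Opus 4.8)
The plan is to reduce everything about fine gradings on $Q$ to the even part $Q^\bz$, exactly as Theorems~\ref{ida} and~\ref{th:isomorphism} reduced $G$-gradings up to isomorphism. That $\tilde{\Gamma}=\Gamma\oplus\underline{\Gamma}^{[h^{1/2}]}$ is a grading on $Q$ is immediate from Theorem~\ref{ida} applied to the group $G[h^{1/2}]$, since $h^{1/2}$ has square $h$ by construction. It then remains to (i) identify the universal group of $\tilde{\Gamma}$ as $G[h^{1/2}]$, (ii) prove $\tilde{\Gamma}$ is fine, (iii) show every fine grading on $Q$ arises this way, and (iv) settle the equivalence criterion.

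For (i), I would first note the canonical surjection $\pi\colon U\to G[h^{1/2}]$, where $U$ denotes the universal group of $\tilde{\Gamma}$; its image is the subgroup generated by $\supp(\tilde{\Gamma})$, which contains $\supp(\Gamma)$ (generating $G$) and, since $Q^\bo\neq 0$, at least one odd-degree element, whence it is all of $G[h^{1/2}]$. To build an inverse, the universal property of $G$ (whose defining relations come from the even--even products, which are among the defining relations of $U$) yields a homomorphism $\rho_0\colon G\to U$; I would extend it by sending $h^{1/2}$ to $\rho_0(g_0)^{-1}\tau_{g_0}$, where $g_0\in\supp(\Gamma)$ and $\tau_{g_0}$ is the class of $Q^\bo_{g_0 h^{1/2}}$. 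The defining relation of $G[h^{1/2}]$, that $h^{1/2}$ squares to $h$, is forced in $U$ by any single nonzero product $Q^\bo_{a h^{1/2}}\cdot Q^\bo_{b h^{1/2}}\subseteq Q^\bz_{abh}$ (such a product exists because the bracket, resp.\ circle, $Q^\bo\otimes Q^\bo\to Q^\bz$ is nonzero), so $\bigl(\rho_0(g_0)^{-1}\tau_{g_0}\bigr)^2=\rho_0(h)$ and the extension $\rho$ is well defined. Checking $\pi\rho=\id$ is routine; the one genuinely nontrivial point---and the main obstacle---is $\rho\pi=\id$ on the odd classes, i.e.\ that $\tau_g=\rho_0(g)\,\rho(h^{1/2})$ for every $g$. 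This is a connectedness statement about the support of the $Q^\bz$-module $Q^\bo$ under the even--odd action, which I would derive from the absolute simplicity of $Q^\bo$ exploited in Proposition~\ref{lemmaone}: if the odd support split into two pieces not linked by any nonzero action $Q^\bz_a\cdot Q^\bo_{b h^{1/2}}$, the span of the components over one piece would be a proper nonzero submodule.

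For (ii) and (iii), the key observation is that the even and odd supports of $\tilde{\Gamma}$ are disjoint, lying in the distinct cosets $G$ and $G\,h^{1/2}$ of $G[h^{1/2}]$, so every homogeneous component of $\tilde{\Gamma}$, and hence of any refinement, is purely even or purely odd. Thus a refinement of $\tilde{\Gamma}$ restricts to a refinement of $\Gamma$ on $Q^\bz$; since $\Gamma$ is fine this even refinement is trivial, and Theorem~\ref{ida} then forces the odd part to be unchanged as a partition, so $\tilde{\Gamma}$ is fine. Conversely, given a fine grading $\Xi$ on $Q$ taken with its universal group, Theorem~\ref{ida} writes $\Xi=\Gamma'\oplus\underline{\Gamma'}^{[d']}$ for its even restriction $\Gamma'$; were $\Gamma'$ not fine, any proper refinement of it would extend by Theorem~\ref{ida} to a proper refinement of $\Xi$, a contradiction. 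Hence $\Gamma'$ is fine, with universal group $G'$ and distinguished element $h'$, and comparing partitions shows $\Xi=\tilde{\Gamma'}$, whose universal group is $G'[h'^{1/2}]$ by part~(i).

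For (iv), I would invoke the criterion recalled in the preliminaries that, for gradings taken with their universal groups, equivalence amounts to isomorphism after relabeling by a group isomorphism of the universal groups. Given $\Gamma\sim\Delta$, realized by an automorphism $\psi$ of $Q^\bz$ and a group isomorphism $\alpha_0\colon G\to G_\Delta$, one checks $\alpha_0(h)=h_\Delta$ because $\psi$ conjugates inner automorphisms to inner ones, so the distinguished element (characterized by $\chi(h)=1\iff\eta_\Gamma(\chi)$ inner) is preserved; then $\alpha_0$ extends to an isomorphism $\alpha\colon G[h^{1/2}]\to G_\Delta[h_\Delta^{1/2}]$ with $\alpha(h^{1/2})=h_\Delta^{1/2}$, and extending $\psi$ to an automorphism $\varphi$ of $Q$ via Proposition~\ref{Aut-Qn} (using Corollary~\ref{cara-do-isomorfismo} to track the odd part) yields the equivalence $\tilde{\Gamma}\sim\tilde{\Delta}$. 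Conversely, any equivalence $\tilde{\Gamma}\sim\tilde{\Delta}$ is induced by a superalgebra automorphism, which preserves parity and hence carries even components to even components; its relabeling isomorphism therefore restricts to an isomorphism $G\to G_\Delta$ on the even supports, giving $\Gamma\sim\Delta$.
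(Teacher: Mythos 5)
Your proposal is correct, but it reaches the two hardest conclusions --- that $G[h^{1/2}]$ is the universal group of $\tilde{\Gamma}$ and that every fine grading on $Q$ is some $\tilde{\Gamma}$ --- by a genuinely different route than the paper. The paper never computes the universal group by generators and relations: it takes an \emph{arbitrary} $G'$-grading on $Q$, writes it as $\Gamma'\oplus\underline{\Gamma'}^{[d']}$ by Theorem~\ref{ida}, realizes $\Gamma'$ as ${}^{\beta}\Gamma$ for a fine grading $\Gamma$ on $Q^\bz$ with universal group $G$, notes that $\beta(h)=h'$, and extends $\beta$ to $\tilde{\beta}\colon G[h^{1/2}]\to G'$ with $\tilde{\beta}(h^{1/2})=d'$, so that the given grading equals ${}^{\tilde{\beta}}\tilde{\Gamma}$. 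This single factorization statement yields at once the universal property of $G[h^{1/2}]$ and the fact that the fine gradings are exactly the $\tilde{\Gamma}$'s. Your bottom-up computation --- the surjection $\pi$, the section $\rho$ built from $\rho_0$, and above all the constancy of the offset $\tau_g\rho_0(g)^{-1}$ proved by the connectedness/simplicity argument for the graded $Q^\bz$-module $Q^\bo$ --- is valid under the paper's hypotheses (in the Lie case $Q^\bo\cong\Sl(n+1)$ is a simple module because $\mathrm{char}\,\mathbb{F}$ does not divide $n+1$; in the Jordan case submodules of $Q^\bo\cong M_n^{(+)}$ are ideals), and it even buys something the paper does not state: it identifies the universal group of $\Gamma\oplus\underline{\Gamma}^{[h^{1/2}]}$ for an \emph{arbitrary} grading $\Gamma$ with universal group $G$, fine or not. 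Your parts (ii), (iii) and (iv) coincide in substance with the paper's arguments (for (iv) the paper routes through Theorem~\ref{th:isomorphism} instead of re-invoking Proposition~\ref{Aut-Qn} and Corollary~\ref{cara-do-isomorfismo}, which is the same proof).

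One presentational defect should be fixed. As written, you justify well-definedness of $\rho$, i.e.\ the relation $\bigl(\rho_0(g_0)^{-1}\tau_{g_0}\bigr)^2=\rho_0(h)$, by a single nonzero product $Q^\bo_{ah^{1/2}}\cdot Q^\bo_{bh^{1/2}}\neq 0$; but that product only gives $\bigl(\rho_0(a)^{-1}\tau_a\bigr)\bigl(\rho_0(b)^{-1}\tau_b\bigr)=\rho_0(h)$, which produces the relation for $g_0$ only once you know the offsets are constant. So the connectedness claim must be established \emph{first} and used twice --- for the well-definedness of $\rho$ as well as for $\rho\pi=\id$ --- rather than only for the latter, as your ordering suggests. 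Since you do prove the constancy correctly, this is a matter of rearrangement, not a gap.
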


\begin{proof}
If $\Gamma$ is fine then so is $\tilde{\Gamma}$, because the grading on $Q^\bo$ is determined by the grading on $Q^\bz$ up to a shift (see Theorem \ref{ida}). 
Note that $\supp(\tilde{\Gamma})=S_\bz\cup S_\bo$ (disjoint union) where $S_\bz=\supp(\Gamma)$ and $S_\bo=S_\bz h^{1/2}$.

Let $\Delta$ be another fine grading on $Q^\bz$. Clearly, if $\tilde{\Gamma}$ and $\tilde{\Delta}$ are equivalent by means of an automorphism
$\varphi\colon Q\to Q$ and a bijection $\alpha\colon\supp(\Gamma)\to\supp(\Delta)$, then $\Gamma$ and $\Delta$ are equivalent by means of $\varphi_\bz$ 
and $\alpha_\bz$ (the restriction of $\alpha$ to $S_\bz$). Conversely, suppose $\Gamma$ and $\Delta$ are equivalent. Let $G'$ be the universal group of $\Delta$ 
and let $h'$ be its distinguished element. 
Then there exists an isomorphism $\beta\colon G\to G'$ such that ${}^\beta\Gamma$ is isomorphic to $\Delta$. It follows that $\beta(h)=h'$ and, hence, we can extend $\beta$ to an isomorphism $\tilde{\beta}\colon G[h^{1/2}]\to G'[(h')^{1/2}]$ such that $\tilde{\beta}(h^{1/2})=(h')^{1/2}$. 
Then ${}^{\tilde{\beta}}\tilde{\Gamma}$ is isomorphic to $\tilde{\Delta}$ by Theorem \ref{th:isomorphism}, so $\tilde{\Gamma}$ and $\tilde{\Delta}$ are equivalent.

Finally, suppose $G'$ is any abelian group and we have a $G'$-grading on $Q$, which, according to Theorem \ref{ida}, we can write as $\Gamma'\oplus\underline{\Gamma'}^{[d']}$ for some $d'\in G'$ 
satisfying $(d')^2=h'$, where $h'$ is the distinguished element of $\Gamma'$. 
There exists a fine grading $\Gamma$ on $Q^\bz$, with universal group $G$, and a homomorphism $\beta\colon G\to G'$ 
such that $\Gamma'={}^\beta\Gamma$. It follows that $\beta$ sends the distinguished element $h$ of $\Gamma$ to $h'$ and, 
hence, we can extend $\beta$ to a homomorphism $\tilde{\beta}\colon G[h^{1/2}]\to G'$ such that $\tilde{\beta}(h^{1/2})=d'$.
Then we obtain $\Gamma'\oplus\underline{\Gamma'}^{[d']}={}^{\tilde{\beta}}\tilde{\Gamma}$ by definition of $\tilde{\Gamma}$.
Since we started with an arbitrary group grading on $Q$, 
it follows that every fine grading on $Q$ has the form $\tilde{\Gamma}$, for some fine grading $\Gamma$ on $Q^\bz$, 
and, moreover, $G[h^{1/2}]$ is the universal group of $\tilde{\Gamma}$.
\end{proof}

To determine the structure of the universal groups of fine gradings on $Q$, note that, since $h^2=e$, 
the group $G[h^{1/2}]$ is isomorphic to $G\times\mathbb{Z}_2$ if $h$ is a square in $G$ and $\overline{G}\times\mathbb{Z}_4$ otherwise, where $\overline{G}=G/\langle h\rangle$.
If $\Gamma$ is of Type I, we have $h=e$, so $\overline{G}=G$ and $G[h^{1/2}]\cong G\times\mathbb{Z}_2$. 
If $\Gamma$ is of Type II, the group $G$ is computed in \cite[\S 3.3]{EKmon} (for algebraically closed $\mathbb{F}$) in terms of the extension 
$\langle h\rangle\to G\to\overline{G}$, which splits if and only if $h$ is not a square in $G$. 
Since the orders of torsion elements of $G$ are divisors of $4$, it follows that $G[h^{1/2}]\cong\overline{G}\times\mathbb{Z}_4$.
To summarize,  
\[
\text{the universal group of $\tilde{\Gamma}$}\cong\left\{\begin{array}{ll}
G\times\mathbb{Z}_2 & \text{if $\Gamma$ is of Type I};\\
\overline{G}\times\mathbb{Z}_4 & \text{if $\Gamma$ is of Type II}.
\end{array}\right.
\]
For Type I, the group $G$ is the universal group of the corresponding fine grading 
on the associative algebra $R$, where $R=M_{n+1}$ in the Lie case ($n\ge 2$) and $R=M_n$ in the Jordan case ($n\ge 3$); 
it is given in \cite[\S 2.3]{EKmon} for all fine gradings.
For Type II, $\overline{G}$ is the universal group of the grading on $R$ 
corresponding to the Type I coarsening induced by the quotient map $G\to\overline{G}$; it is computed in \cite[\S 3.2]{EKmon}. Note that $n=2$ is exceptional in the Jordan case, since $M_2^{(+)}$ is isomorphic to the Jordan algebra of a nondegenerate bilinear form on the $3$-dimensional space, which admits two fine gradings, with universal groups $\mathbb{Z}_2^3$ and $\mathbb{Z}\times\mathbb{Z}_2$ (see \cite[\S 5.6]{EKmon}). Both are of Type II, so the universal groups of the corresponding gradings on $Q(2)$ are $\mathbb{Z}_2^2\times\mathbb{Z}_4$ and $\mathbb{Z}\times\mathbb{Z}_4$, respectively.

\bibliographystyle{amsplain}%amsalpha}
\bibliography{refs} 

\end{document}